\newtheorem{theorem}{Theorem}[section]
\newtheorem{corollary}{Corollary}[section]
\newtheorem{lemma}{Lemma}[section]
\newtheorem{remark}{Remark}[section]
\newtheorem{defi}{Definition}[section]
\newtheorem{prop}{Proposition}[section]
\newcommand{\be}{\begin{equation}}
\newcommand{\ee}{\end{equation}}
\renewcommand{\theequation}{\thesection.\arabic{equation}}
\renewcommand{\thetheorem}{\thesection.\arabic{theorem}}
\renewcommand{\theequation}{\thesection.\arabic{equation}}
\begin{document}

\title[] {Embedding of certain vertex algebras without vacuum into 
vertex algebras}

\author{Thomas J. Robinson}


\begin{abstract}
We show that certain vertex algebras without vacuum vector may be
embedded into vertex algebras.  The result is a partial analogue of
the simple classical fact that any rng can be embedded into a ring.  A
one-line proof of the case of a vacuum-free vertex algebra (whose
vertex operator map is by definition injective) appeared in \cite{R}
using a powerful result from the representation theory of vertex
algebras as algebras of mutually local weak vertex operators.  Here we
present a more elementary proof of a somewhat more general case.  We
also show that our constructions are canonical.
\end{abstract}

\maketitle

\renewcommand{\theequation}{\thesection.\arabic{equation}}
\renewcommand{\thetheorem}{\thesection.\arabic{theorem}}
\setcounter{equation}{0} \setcounter{theorem}{0}
\setcounter{section}{0}

\section{Introduction}
Vertex algebras were originally mathematically defined by R. Borcherds
in \cite{B}.  The closely related notion of vertex operator algebra
was introduced in \cite{FLM}.  The axioms used in \cite{FLM} were in
``generating function form,'' as we shall use here.  The main axiom of
any vertex-type algebra is the Jacobi identity, or some equivalent.
As a secondary point in \cite{R}, among other things, the notion of
vacuum-free vertex algebra was formalized in order to present a more
detailed examination of equivalent axiom systems for vertex algebras.  A
vacuum-free vertex algebra retains the major axiom, the Jacobi
identity, and thus much of the basic theory of the axioms of vertex
operator algebras can be worked out in this spare setting.  As
mentioned in \cite{R}, the motivation for formalizing vacuum-free
vertex algebras was not example-driven but also, as there, we refer
the reader to \cite{BD} and \cite{HL}, where a vacuum-free setting
appeared.  In this note, we show that certain vertex algebras without
vacuum can be embedded into vertex algebras.

The vertex operator map in any vertex algebra is injective.  This
follows from the ``creation property'' and so is generally not
explicitly stated as an axiom.  It was pointed out in \cite{R}, the
proof having appeared in ``pieces'' in \cite{FHL} and \cite{LL} (see
Remark 2.2.4 in \cite{FHL} and Proposition 3.6.7 in \cite{LL}), that
if injectivity is stated explicitly as an axiom, then the ``vacuum
property'' and creation property axioms each follow from the other in
the presence of the remaining axioms.  Actually, it is further pointed
out in \cite{R} that one only needs to retain ``vacuum-free
skew-symmetry'' in place of the Jacobi identity and the result still
holds (see the introduction to Section 6 and Proposition 6.1 of
\cite{R}).  It is pointed out in Remark 3.6.8 of \cite{LL} that if
injectivity is not separately stated as an axiom, then the creation
property is not redundant and one loses this minor symmetry among the
axioms.  For this reason, the notion of vacuum-free vertex algebras in
\cite{R} retained injectivity as an axiom.  Injectivity was used in
only two other places in that paper, first in the proof of Proposition
4.6, showing that ``weak commutativity'' and vacuum-free skew-symmetry
can replace the Jacobi identity in the definition of the notion of
vacuum-free vertex algebra, and second, in Remark 4.1, which we next
discuss.

The relation between vertex algebras and vacuum-free vertex algebras
and that between rings and rngs (rings without identity element) are,
of course, loosely analogous, a point made in \cite{R}.  It is well
known that rngs can be embedded into rings.  The corresponding
question for vacuum-free vertex algebras was raised by members of the
Quantum Mathematics/Lie Group seminar at Rutgers when a preliminary
version of \cite{R} was presented there, and was again raised by the
referee of \cite{R}.  An easy answer, making use of the injectivity
property was given in Remark 4.1 in \cite{R}.  The proof depended on a
powerful result from the representation theory of vertex algebras as
algebras of mutually local weak vertex operators (see \cite{L};
cf. \cite{LL}).  We shall reproduce this result below, which shows that any
vacuum-free vertex algebra may be embedded into a vertex algebra.  We
then give a (much more involved but somewhat pleasing)
non-representation theoretic proof of somewhat more general results.
This non-representation theoretic proof relies essentially on two
constructions, the first of which is universal, and while the second
one is basis dependent at first, we give a basis-free characterization
of it in the final section.

Addendum: After finishing this paper, I learned of a recent pre-print
\cite{LTW} of Haisheng Li, Shaobin Tan and Qing Wang.  Their paper is
concerned, in part, with some of the topics of this paper.  What we
call an ertex algebra, those authors call a Leibniz vertex algebra and
what we call a $D$-ertex algebra, those authors call a vertex algebra
without vacuum (in the sense of Huang and Lepowsky \cite{HL}).  Our
results in Remark \ref{rem:Deq} and Proposition \ref{prop:Dto1}, an
analog of Theorem \ref{lem:E'ert}, and a canonical property analogous
to our various canonical properties are obtained in \cite{LTW}.  Also,
Corollary 2.27 of \cite{LTW}, which I did not obtain, gives a complete
answer to one of the issues raised here.

We thank James Lepowsky and Haisheng Li for their helpful comments.
\section{Preliminaries}
We shall write $x,y,z,t$ for commuting formal
variables.  In this paper, formal variables will always commute, and
we will not use complex variables.  All vector spaces will be over
$\mathbb{C}$, although one may easily generalize many results to the
case of a field of characteristic $0$.  Let $V$ be a vector space.  We
use the following:
\begin{align*}
V[[x,x^{-1}]]=\biggl\{ \sum_{n \in \mathbb{Z}}v_{n}x^{n}|v_{n} \in V  \biggr\}
\end{align*}
(formal Laurent series),
and some of its subspaces:
\begin{align*}
V((x))=\biggl\{ \sum_{n \in \mathbb{Z}}v_{n}x^{n}|v_{n} \in V, v_{n}=0
\text{ for sufficiently negative } n \biggr\}
\end{align*}
(truncated formal Laurent series),
\begin{align*}
V[[x]]=\biggl\{ \sum_{n  \geq 0}v_{n}x^{n}|v_{n} \in V \biggr\}
\end{align*}
(formal power series), and
\begin{align*}
V[x]=\biggl\{ \sum_{n \geq 0}v_{n}x^{n}|v_{n} \in V, v_{n}=0 \text{ for all
but finitely many } n \biggr\}
\end{align*}
(formal polynomials).

For $f(x)=\sum_{n \in \mathbb{Z}}a_{n}x^{n} \in V[[x,x^{-1}]]$ let
$\text{Res}_{x}:V[[x,x^{-1}]] \rightarrow V$ be given by
\begin{align*}
\text{Res}_{x}f(x)=a_{-1}.
\end{align*}

Further, we shall frequently use the notation $e^{w}$ to refer to the
formal exponential expansion, where $w$ is any formal object for which
such expansion makes sense.  For instance, we have the linear operator
$e^{y\frac{d}{dx}}:\mathbb{C}[[x,x^{-1}]] \rightarrow
\mathbb{C}[[x,x^{-1}]][[y]]$:
\begin{align*}
e^{y\frac{d}{dx}}=\sum_{n \geq
0}\frac{y^{n}}{n!}\left(\frac{d}{dx}\right)^{n}.
\end{align*}
We have (cf. (2.2.18) in \cite{LL}), the \it{automorphism property}\rm:
\begin{align}
\label{defi:aut}
e^{y\frac{d}{dx}}(p(x)q(x))=\left(e^{y\frac{d}{dx}}p(x)\right)
\left(e^{y\frac{d}{dx}}q(x)\right),
\end{align}
for all $p(x) \in \text{\rm End}\,V[x,x^{-1}] $ and $q(x) \in
\text{\rm End}\,V[[x,x^{-1}]]$.  We use the \it{binomial expansion
convention}, \rm which states that
\begin{align}
\label{binexpconv}
(x+y)^{n}=\sum_{k \geq 0}\binom{n}{k}x^{n-k}y^{k},
\end{align}
where we allow $n$ to be any integer and where we define
\begin{align*}
\binom{n}{k}=\frac{n(n-1)(n-2) \cdots (n-k+1)}{k!};
\end{align*}
the binomial expression is expanded in nonnegative powers of the
second-listed variable.  We also have (cf. Proposition 2.2.2 in
\cite{LL}) the \it{formal Taylor theorem}\rm :
\begin{prop}
Let $v(x) \in V[[x,x^{-1}]]$. Then
\begin{align*}
e^{y\frac{d}{dx}}v(x)&=v(x+y).
\end{align*}
\end{prop}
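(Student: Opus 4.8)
The plan is to reduce the identity to the case of a single monomial $v(x)=x^{n}$ and then invoke linearity, the only genuine subtlety being that $v(x)$ is a doubly infinite sum, so I must check that both sides are well defined coefficient by coefficient before equating them. Writing $v(x)=\sum_{n\in\mathbb{Z}}v_{n}x^{n}$ with $v_{n}\in V$, I would first observe that for each fixed $k\geq 0$ and each fixed power $x^{m}$ only the index $n=m+k$ contributes to the coefficient of $x^{m}y^{k}$ on the left-hand side, so that $e^{y\frac{d}{dx}}v(x)$ is a well-defined element of $V[[x,x^{-1}]][[y]]$; the same bookkeeping applies to $v(x+y)=\sum_{n}v_{n}(x+y)^{n}$ after expanding each $(x+y)^{n}$ by the binomial expansion convention. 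This reduces the theorem to matching these coefficients, for which it suffices to treat each monomial separately.

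For the monomial computation, I would apply the definition of the formal exponential together with the power rule $\left(\frac{d}{dx}\right)^{k}x^{n}=n(n-1)\cdots(n-k+1)\,x^{n-k}$, which is valid for every integer $n$. This gives
\begin{align*}
e^{y\frac{d}{dx}}x^{n}=\sum_{k\geq 0}\frac{y^{k}}{k!}\,n(n-1)\cdots(n-k+1)\,x^{n-k}=\sum_{k\geq 0}\binom{n}{k}x^{n-k}y^{k},
\end{align*}
and by the binomial expansion convention \eqref{binexpconv} the right-hand side is exactly $(x+y)^{n}$. Thus $e^{y\frac{d}{dx}}x^{n}=(x+y)^{n}$ for all $n\in\mathbb{Z}$.

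Finally I would assemble the result: applying $e^{y\frac{d}{dx}}$ termwise to $v(x)=\sum_{n}v_{n}x^{n}$ and using the monomial identity yields $\sum_{n}v_{n}(x+y)^{n}=v(x+y)$, the interchange of the $n$-sum with the $k$-sum being justified by the coefficientwise finiteness noted above. I expect the main (and only) obstacle to be this bookkeeping step: one must confirm that, after expanding the binomials, the coefficient of each $x^{m}y^{k}$ on both sides is the single finite expression $\binom{m+k}{k}v_{m+k}$, so that no genuine convergence issue arises and the formal manipulations are legitimate. The monomial computation itself is a direct application of the definitions and \eqref{binexpconv}.
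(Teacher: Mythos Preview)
Your argument is correct and is the standard proof of the formal Taylor theorem: reduce by termwise linearity to the monomial case $x^{n}$, compute $e^{y\frac{d}{dx}}x^{n}$ directly from the power rule, and recognize the result as the binomial expansion $(x+y)^{n}$; the coefficientwise finiteness check you include is exactly what is needed to justify the interchange of sums for a doubly infinite series.

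There is nothing to compare, however, because the paper does not supply its own proof: it simply records the statement with a citation to Proposition~2.2.2 of \cite{LL} and closes with a $\square$. Your proof is precisely the argument one finds in that reference, so in effect you have filled in what the paper chose to quote.
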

\begin{flushright} $\square$ \end{flushright}

For completeness we include a proof of the following frequently used
 (easy but non-vacuous) fact, which equates two different expansions.
\begin{prop}
\label{prop:assocarith}
For all $n \in \mathbb{Z}$,
\begin{align*}
(x+(y+z))^{n}=((x+y)+z)^{n}.
\end{align*}
\end{prop}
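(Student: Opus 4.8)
The plan is to expand each side into the formal series ring $\mathbb{C}[x,x^{-1}][[y,z]]$ using the binomial expansion convention \eqref{binexpconv} and then to compare coefficients. The point to keep in mind is that the statement is genuinely non-vacuous precisely \emph{because} of the expansion convention: the convention breaks the symmetry of addition, since the second-listed variable in each binomial is the one expanded in nonnegative powers, and the grouping on each side dictates a different order of expansion. So the content is not associativity of $+$ but a combinatorial identity among the resulting coefficients. I would first check that both expansions do land in $\mathbb{C}[x,x^{-1}][[y,z]]$, i.e. that for each fixed monomial $y^{b}z^{c}$ (with $b,c \geq 0$) the coefficient is a single well-defined power of $x$, so that coefficient comparison is legitimate.

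For the left-hand side, I would expand the outer binomial in powers of the second-listed object $(y+z)$, giving $(x+(y+z))^{n}=\sum_{k \geq 0}\binom{n}{k}x^{n-k}(y+z)^{k}$, and then expand each $(y+z)^{k}$, which is an \emph{honest polynomial} since $k \geq 0$, as $\sum_{j=0}^{k}\binom{k}{j}y^{k-j}z^{j}$. Collecting the monomial $y^{b}z^{c}$ forces $k=b+c$ and $j=c$, so the coefficient of $x^{n-b-c}y^{b}z^{c}$ is $\binom{n}{b+c}\binom{b+c}{c}$. For the right-hand side I would expand in powers of the second-listed variable $z$, obtaining $((x+y)+z)^{n}=\sum_{m \geq 0}\binom{n}{m}(x+y)^{n-m}z^{m}$, and then expand $(x+y)^{n-m}=\sum_{l \geq 0}\binom{n-m}{l}x^{n-m-l}y^{l}$ (here $n-m$ may be negative, so this is the genuinely infinite convention). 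Collecting $y^{b}z^{c}$ forces $m=c$ and $l=b$, so the coefficient of $x^{n-b-c}y^{b}z^{c}$ is $\binom{n}{c}\binom{n-c}{b}$.

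The proof then reduces to the identity
\begin{align*}
\binom{n}{b+c}\binom{b+c}{c}=\binom{n}{c}\binom{n-c}{b}
\end{align*}
for all integers $n$ and all $b,c \geq 0$. This is the one step I expect to require genuine care: the familiar ``subset-of-a-subset'' identity is usually quoted for nonnegative integers $n$, whereas here $n$ is an arbitrary integer, so I cannot simply cancel factorials. Instead I would expand both sides using the falling-factorial definition of $\binom{n}{k}$ given in the preliminaries. The left side becomes $\tfrac{n(n-1)\cdots(n-b-c+1)}{(b+c)!}\cdot\tfrac{(b+c)!}{c!\,b!}=\tfrac{n(n-1)\cdots(n-b-c+1)}{b!\,c!}$, while the right side becomes $\tfrac{n(n-1)\cdots(n-c+1)}{c!}\cdot\tfrac{(n-c)(n-c-1)\cdots(n-c-b+1)}{b!}$, whose numerator telescopes into the same falling factorial $n(n-1)\cdots(n-b-c+1)$. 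Both sides thus equal $n(n-1)\cdots(n-b-c+1)/(b!\,c!)$, which completes the comparison and hence the proof.
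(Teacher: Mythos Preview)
Your proof is correct. It is, however, a genuinely different argument from the paper's. The paper proceeds via the formal Taylor theorem: writing $(x+(y+z))^{n}=e^{(y+z)\frac{\partial}{\partial x}}x^{n}$ and using the identity $e^{w_{1}+w_{2}}=e^{w_{1}}e^{w_{2}}$ for commuting $w_{1},w_{2}$, one gets $e^{y\frac{\partial}{\partial x}}e^{z\frac{\partial}{\partial x}}x^{n}=e^{y\frac{\partial}{\partial x}}(x+z)^{n}=((x+y)+z)^{n}$. That route hides the binomial combinatorics inside the exponential formalism and is a one-line computation once the formal Taylor theorem is in hand. Your route, by contrast, unpacks both expansions explicitly and reduces the claim to the falling-factorial identity $\binom{n}{b+c}\binom{b+c}{c}=\binom{n}{c}\binom{n-c}{b}$ for arbitrary integers $n$; this is more hands-on and entirely self-contained (no appeal to the Taylor theorem), and it makes transparent exactly which combinatorial fact is doing the work. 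Either approach is perfectly adequate here.
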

\begin{proof}
If $w_{1}$ and $w_{2}$ are
commuting formal objects, then $e^{w_{1}+w_{2}}=e^{w_{1}}e^{w_{2}}$.
Thus we have
\begin{align*}
(x+(y+z))^{n}
=e^{(y+z)\frac{\partial}{\partial x}}x^{n}
=e^{y\frac{\partial}{\partial x}}
\left(e^{z\frac{\partial}{\partial x}}x^{n}\right)
=e^{y\frac{\partial}{\partial x}}(x+z)^{n}
=((x+y)+z)^{n}.
\end{align*}
\end{proof}
We note as a consequence that for all integers $n$ (and not just
nonnegative integers) we have the (non-vacuous) fact that
\begin{align*}
((x+y)-y)^{n}=(x+(y-y))^{n}=x^{n}.
\end{align*}

We recall the definition of the formal delta function (cf. (2.1.32)
in \cite{LL})
\begin{align*}
\delta (x) = \sum_{ n \in \mathbb{Z}}x^{n}.
\end{align*}

We have the following two elementary identities (cf. Proposition 2.3.8
in \cite{LL}):
\begin{align}
\label{twotermdelta}
x_{1}^{-1}\delta\left(\frac{x_{2}+x_{0}}{x_{1}}\right)-
x_{2}^{-1}\delta\left(\frac{x_{1}-x_{0}}{x_{2}}\right)=0
\end{align}
and
\begin{align}
\label{threetermdelta}
x_{0}^{-1}\delta\left(\frac{x_{1}-x_{2}}{x_{0}}\right)-
x_{0}^{-1}\delta\left(\frac{-x_{2}+x_{1}}{x_{0}}\right)-
x_{1}^{-1}\delta\left(\frac{x_{2}+x_{0}}{x_{1}}\right)=0.
\end{align}

Finally, we recall (see Proposition 2.3.21 and Remarks 2.3.24 and
2.3.25 in \cite{LL}) the delta-function substitution property:
\begin{prop}
\label{prop:deltasub}
For $f(x,y,z) \in \text{\rm End}\,V[[x,x^{-1},y,y^{-1},z,z^{-1}]]$ 
such that for each fixed $v \in V$,
\begin{align*}
f(x,y,z)v \in \text{\rm End}\,V[[x,x^{-1},y,y^{-1}]]((z)),
\end{align*}
and such that
\begin{align*}
\text{\rm lim}_{x \rightarrow y}f(x,y,z)
\end{align*}
exists (where the ``limit'' is the indicated formal substitution), we
 have
\begin{align*}
\delta\left(\frac{y+z}{x}\right)f(x,y,z)=\delta\left(\frac{y+z}{x}
\right)f(y+z,y,z)=\delta\left(\frac{y+z}{x}\right)f(x,x-z,z),
\end{align*}
where, in particular, all the products exist.
\end{prop}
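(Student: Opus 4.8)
The plan is to reduce all three expressions to the single elementary substitution fact
\[ x^{m}\,\delta\!\left(\frac{y+z}{x}\right)=(y+z)^{m}\,\delta\!\left(\frac{y+z}{x}\right),\qquad m\in\mathbb{Z}, \]
which I would prove by writing $\delta\!\left(\frac{y+z}{x}\right)=\sum_{n\in\mathbb{Z}}(y+z)^{n}x^{-n}$, shifting the summation index $n\mapsto m+n$, and using the multiplicativity $(y+z)^{m+n}=(y+z)^{m}(y+z)^{n}$ of the binomial expansion in nonnegative powers of $z$, which itself follows from the automorphism property \eqref{defi:aut}. Summing this identity against the coefficients of a one-variable series $g(x)$ then yields $\delta\!\left(\frac{y+z}{x}\right)g(x)=\delta\!\left(\frac{y+z}{x}\right)g(y+z)$ whenever both sides are defined, together with the analogous statement $y^{m}\delta\!\left(\frac{x-z}{y}\right)=(x-z)^{m}\delta\!\left(\frac{x-z}{y}\right)$ obtained by the same reindexing in the variable $y$.

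For the first equality I would write $f(x,y,z)=\sum_{a}f_{a}(y,z)x^{a}$ and apply the substitution fact termwise in $x$, obtaining
\[ \delta\!\left(\frac{y+z}{x}\right)f(x,y,z)=\delta\!\left(\frac{y+z}{x}\right)f(y+z,y,z), \]
the substitution $x\mapsto y+z$ being expanded in nonnegative powers of $z$. Here both hypotheses enter in securing existence: the assumption $f(x,y,z)v\in\text{\rm End}\,V[[x,x^{-1},y,y^{-1}]]((z))$ bounds below the powers of $z$ that occur, while the existence of $\lim_{x\to y}f(x,y,z)$ guarantees that along each diagonal $a+b=\text{const}$ only finitely many coefficients of $f$ are nonzero. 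Together these force the coefficient of each monomial $x^{a}y^{b}z^{c}$ in the product to be a finite sum, so that the product exists and $f(y+z,y,z)$ is well-defined.

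For the second equality I would first rewrite the delta factor using the two-term identity \eqref{twotermdelta}, which in the present variables reads $x^{-1}\delta\!\left(\frac{y+z}{x}\right)=y^{-1}\delta\!\left(\frac{x-z}{y}\right)$, so that $\delta\!\left(\frac{y+z}{x}\right)=xy^{-1}\delta\!\left(\frac{x-z}{y}\right)$. Applying the companion substitution fact in the variable $y$ to carry out $y\mapsto x-z$ then gives
\[ \delta\!\left(\frac{y+z}{x}\right)f(x,y,z)=\delta\!\left(\frac{y+z}{x}\right)f(x,x-z,z). \]
I expect the main obstacle to lie not in any single algebraic manipulation but in the bookkeeping that justifies the existence of each doubly-infinite product and the interchange of the delta summation with the coefficients of $f$; the two truncation hypotheses must be invoked carefully, once for each substitution, to control the relevant infinite sums. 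Once existence is secured, the remaining reindexing arguments are purely formal and rest only on \eqref{defi:aut} and \eqref{twotermdelta}.
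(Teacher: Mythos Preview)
The paper does not actually prove this proposition: it is stated with a citation to Proposition 2.3.21 and Remarks 2.3.24--2.3.25 of \cite{LL} and closed immediately with a $\square$. So there is no in-paper argument to compare against; your task reduces to whether your sketch stands on its own, and it does. The reduction to the monomial identity $x^{m}\delta\!\left(\frac{y+z}{x}\right)=(y+z)^{m}\delta\!\left(\frac{y+z}{x}\right)$, the termwise substitution for the first equality, and the passage through \eqref{twotermdelta} to recast the delta as $xy^{-1}\delta\!\left(\frac{x-z}{y}\right)$ before substituting in $y$ for the second equality, is exactly the route taken in \cite{LL}. Your identification of where each hypothesis is used---truncation in $z$ to bound $k$ in the binomial expansion, and the existence of $\lim_{x\to y}f$ to give the finiteness along diagonals $a+b=\text{const}$---is also correct and is precisely what is needed to justify that the products and the substituted series are well defined.
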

\begin{flushright} $\square$ \end{flushright}

As in \cite{LL}, we use similarly verified substitutions below without
comment.

\section{Definitions and motivating results}
In the spirit of rings without identity being called rngs, we shall
call vertex algebras without vacuum ``ertex algebras'':

\begin{defi} \rm
An \it{ertex algebra} \rm is a vector space equipped, first, with a
linear map (the \it{vertex operator map}) $V \otimes V \rightarrow
V[[x,x^{-1}]]$, \rm or equivalently, a linear map
\begin{align*}
Y(\,\cdot\,,x): \quad &V \, \rightarrow \, (\text{\rm End}V)[[x,x^{-1}]]\\
&v \, \mapsto \, Y(v,x)=\sum_{n \in \mathbb{Z}}v_{n}x^{-n-1}.
\end{align*}
We call $Y(v,x)$ the {\it vertex operator associated with} $v$.  We
assume that
\begin{align*}
Y(u,x)v \in V((x))
\end{align*}
for all $u,v \in V$, the {\it truncation property}.  Finally, we
require that the {\it Jacobi identity} is satisfied:
\begin{align}
\label{Jac}
x_{0}^{-1}\delta\left(\frac{x_{1}-x_{2}}{x_{0}}\right)Y(u,x_{1})Y(v,x_{2})&-
x_{0}^{-1}
\delta\left(\frac{-x_{2}+x_{1}}{x_{0}}\right)Y(v,x_{2})Y(u,x_{1}) \nonumber\\
&=x_{1}^{-1}\delta\left(\frac{x_{2}+x_{0}}{x_{1}}\right)Y(Y(u,x_{0})v,x_{2}).
\end{align}
\end{defi}

\begin{defi} \rm
An {\it injective ertex algebra} is an ertex algebra whose vertex
operator map is injective.
\end{defi}

\begin{remark} \rm
Following \cite{R} we referred to ``vacuum-free vertex algebras'' in
our introduction.  Vacuum-free vertex algebras are exactly the same as
what we have just called injective ertex algebras, but in the context
of this paper using the term injective ertex algebra seems clearer,
so from this point on, we shall not use the term ``vacuum-free vertex
algebra.''
\end{remark}

We may sometimes call an ertex algebra simply by the name of the
underlying vector space, or, to be more precise, sometimes by the
pair $(E,Y)$.

We note the following fact, although we shall only use it in the case
of an injective ertex algebra.
\begin{prop}
\label{prop:vfss}
\it Let $V$ be an ertex algebra.  For all $u, v \in V$, we have
\begin{align*}
Y(Y(u,x_{0})v,x_{2})=Y(Y(v, -x_{0})u,x_{2}+x_{0}).
\end{align*}
\end{prop}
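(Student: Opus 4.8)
The plan is to derive this ``vacuum-free skew-symmetry'' directly from the Jacobi identity \eqref{Jac} by applying the residue operator $\mathrm{Res}_{x_{1}}$, which is the standard technique for extracting component identities from a delta-function Jacobi identity. The key observation is that the right-hand side of \eqref{Jac} carries the factor $x_{1}^{-1}\delta\!\left(\frac{x_{2}+x_{0}}{x_{1}}\right)$, whose residue in $x_{1}$ performs the substitution $x_{1} \mapsto x_{2}+x_{0}$. So first I would take $\mathrm{Res}_{x_{1}}$ of both sides of \eqref{Jac}. On the right-hand side this yields simply $Y(Y(u,x_{0})v,x_{2})$, using that $\mathrm{Res}_{x_{1}} x_{1}^{-1}\delta\!\left(\frac{x_{2}+x_{0}}{x_{1}}\right) = 1$ and that the remaining factor $Y(Y(u,x_{0})v,x_{2})$ does not involve $x_{1}$.

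Next I would compute $\mathrm{Res}_{x_{1}}$ of the left-hand side. The two delta functions there, $x_{0}^{-1}\delta\!\left(\frac{x_{1}-x_{2}}{x_{0}}\right)$ and $x_{0}^{-1}\delta\!\left(\frac{-x_{2}+x_{1}}{x_{0}}\right)$, differ only in how $x_{1}-x_{2}$ is expanded; by the three-term delta identity \eqref{threetermdelta} their difference equals $x_{1}^{-1}\delta\!\left(\frac{x_{2}+x_{0}}{x_{1}}\right)$. The natural route is to use the delta-function substitution property (Proposition~\ref{prop:deltasub}) to collapse the products on the left, thereby enforcing $x_{1} = x_{2}+x_{0}$ inside the vertex operators, and then to extract the residue. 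The subtraction of the two oppositely expanded delta functions is exactly what makes the residue in $x_{1}$ nonzero and produces a single well-defined expression. After the substitution $x_{1} \mapsto x_{2}+x_{0}$, the surviving term should read $Y(v,x_{2})Y(u,x_{2}+x_{0})$ or a reindexing thereof.

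The final step is to recognize that the resulting expression on the left equals $Y(Y(v,-x_{0})u,x_{2}+x_{0})$. Here I would reapply the same residue-of-Jacobi technique, but now to the Jacobi identity with the roles of $u$ and $v$ swapped and the variable relabelings corresponding to $x_{0} \mapsto -x_{0}$ and $x_{2} \mapsto x_{2}+x_{0}$; this is where the arithmetic facts of Proposition~\ref{prop:assocarith}, in particular the identity $((x_{2}+x_{0})-x_{0})=x_{2}$, become essential for matching the two substituted forms. Alternatively, one applies $\mathrm{Res}_{x_{1}}$ to the Jacobi identity a second time after the swap and compares the two iterated-operator expressions $Y(v,x_{2})Y(u,x_{1})$ produced on each side.

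The main obstacle I expect is bookkeeping the expansions carefully enough that the formal substitutions are legitimate: delta-function substitutions of the form $x_{1} \mapsto x_{2}+x_{0}$ are only valid when the expression being substituted into lies in the appropriate space of truncated series, so I would need to invoke the truncation property to guarantee that $Y(u,x_{1})Y(v,x_{2})$ and the composite $Y(Y(u,x_{0})v,x_{2})$ satisfy the hypotheses of Proposition~\ref{prop:deltasub}, and I would need to track which variable governs the nonnegative-power expansion at each stage so that the two delta functions combine correctly via \eqref{threetermdelta} rather than cancelling.
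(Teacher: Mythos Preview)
Your approach is correct and is essentially the standard argument the paper is citing (Proposition~3.1.19 in \cite{LL}): take $\mathrm{Res}_{x_{1}}$ of the Jacobi identity and compare with the Jacobi identity having $u\leftrightarrow v$, $x_{0}\mapsto -x_{0}$, $x_{1}\leftrightarrow x_{2}$. One remark on execution: your ``main route'' of substituting $x_{1}\mapsto x_{2}+x_{0}$ directly into the iterate $Y(u,x_{1})Y(v,x_{2})w$ is exactly the delicate point you flag, and in fact it is cleaner to bypass it entirely. After the swap and relabeling, the left-hand side of the second Jacobi identity is \emph{literally equal} as a formal series to the left-hand side of the first (the two delta terms simply trade places with matching signs), so the right-hand sides agree; applying $\mathrm{Res}_{x_{1}}$ together with the two-term identity \eqref{twotermdelta} and a single legitimate substitution on the $Y(Y(v,-x_{0})u,x_{1})$ factor then gives the result without ever needing to substitute into a product of two vertex operators. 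This is the version your ``Alternatively'' sentence points to, and it is the one to write up.
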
 
\begin{proof}
The proof is essentially a piece of a well-known proof of
skew-symmetry (cf. Proposition 3.1.19 in \cite{LL}).
\end{proof}
The following definition of ``vertex algebra'' is equivalent to the
definition in \cite{B} (cf. Proposition 3.6.6 in \cite{LL}):
\begin{defi} \rm
A \it{vertex algebra} \rm is an ertex algebra $(V,Y)$ together
with a distinguished element $\textbf{1}$ satisfying the 
\it{vacuum property} \rm
\begin{align*}
Y(\textbf{1},x)=1
\end{align*}
and the \it{creation property}\rm
\begin{align*}
Y(u,x)\textbf{1} \in V[[x]] \text{ and}\\
Y(u,0)\textbf{1}=u  \text{ for all }u \in V.
\end{align*}
\end{defi}
Every vertex algebra $V$ has a linear operator given by
\begin{align*}
\mathcal{D}(v)=v_{-2}{\bf 1} \qquad \text{for all} \quad v \in V.
\end{align*} 
We shall call this the ``derivative operator.''  

We may sometimes refer to a vertex algebra simply by the name of the
underlying vector space, but for more precision we shall also
sometimes refer to it using a triple such as $(V,Y,{\bf 1})$.

\begin{remark} \rm
The injectivity of the vertex operator map follows from the creation property.
\end{remark}
As we observed in \cite{R}, we have:
\begin{theorem} \rm
\label{main}
An injective ertex algebra can always be embedded into a vertex algebra. 
\end{theorem}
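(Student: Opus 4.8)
The plan is to realize $E$ concretely as a space of weak vertex operators acting on its own underlying vector space, and then to feed this space into the representation-theoretic result on local systems of vertex operators (\cite{L}, cf.\ \cite{LL}) to produce a vertex algebra containing it. First I would invoke the injectivity hypothesis: since the vertex operator map $v \mapsto Y(v,x)$ is injective, it identifies $E$, as a vector space, with $S=\{Y(v,x):v\in E\}\subseteq(\mathrm{End}\,E)[[x,x^{-1}]]$. The truncation property says precisely that each $Y(v,x)$ maps $E$ into $E((x))$, so every element of $S$ is a weak vertex operator on the vector space $E$.

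Next I would check that $S$ is a \emph{local} set, i.e.\ that any two of its members are mutually local: for each pair there is a nonnegative integer $k$ with $(x_{1}-x_{2})^{k}Y(u,x_{1})Y(v,x_{2})=(x_{1}-x_{2})^{k}Y(v,x_{2})Y(u,x_{1})$. This is the standard fact that the Jacobi identity (\ref{Jac}) implies weak commutativity; concretely one multiplies (\ref{Jac}) by a sufficiently high power $x_{0}^{k}$ (chosen via the truncation property so that the right-hand side contributes no $x_{0}^{-1}$ term) and applies $\mathrm{Res}_{x_{0}}$, the two delta-function terms on the left collapsing to $(x_{1}-x_{2})^{k}$ times the two orderings.

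With $S$ now a local set of weak vertex operators on $E$, the cited result (which this paper reproduces) furnishes a vertex algebra $V$, namely the local system generated by $S$ together with the identity operator $\mathrm{Id}_{E}$ serving as vacuum, whose vertex operator $\mathcal{Y}$ is defined on such pairs by the iterate (residue) formula in each component. I would then verify that $\phi\colon E\to V$, $v\mapsto Y(v,x)$, is a homomorphism of ertex algebras, that is, $\mathcal{Y}(\phi(u),x_{0})\phi(v)=\phi(Y(u,x_{0})v)$. This is exactly the iterate form of the Jacobi identity: applying $\mathrm{Res}_{x_{1}}$ to (\ref{Jac}) and reading off the coefficient of $x_{0}^{-n-1}$ yields
\[
Y(u_{n}v,x_{2})=\mathrm{Res}_{x_{1}}\left((x_{1}-x_{2})^{n}Y(u,x_{1})Y(v,x_{2})-(-x_{2}+x_{1})^{n}Y(v,x_{2})Y(u,x_{1})\right),
\]
which is term by term the product computed inside $V$. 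Since $\phi$ is injective by hypothesis, it embeds $E$ into the vertex algebra $V$.

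The genuine obstacle here is essentially imported: the real work is done by the local-systems theorem, whose proof (that the iterate product on a maximal local subspace closes up and satisfies all the vertex algebra axioms, with $\mathrm{Id}_{E}$ as vacuum) is the substantial input and is the reason the original argument in \cite{R} depended on that machinery. Within the present reduction, the only delicate point is matching the ambient structure of $V$ with the given structure of $E$, i.e.\ confirming that the iterate product computed inside the local system reproduces $Y(Y(u,x_{0})v,x_{2})$; as indicated above, this is precisely the content of the Jacobi identity, so no further hypothesis beyond injectivity and (\ref{Jac}) is needed.
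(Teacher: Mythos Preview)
Your proposal is correct and follows essentially the same route as the paper's (first) proof: realize $E$ via its adjoint representation as a set of mutually local weak vertex operators on itself and invoke the local-systems theorem of \cite{L} (cf.\ Theorem 5.5.18 in \cite{LL}) to generate a vertex algebra containing the image, with injectivity guaranteeing an embedding. The paper compresses this into two sentences, whereas you spell out the verification of mutual locality from (\ref{Jac}) and the fact that the iterate product in the generated vertex algebra matches $Y(Y(u,x_{0})v,x_{2})$; these are exactly the details implicit in the paper's phrase ``the adjoint representation \dots\ yields a set of mutually local vertex operators'' together with the standard compatibility of the adjoint representation.
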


\begin{proof}
 The adjoint representation of any given injective ertex algebra $E$
yields a set of mutually local vertex operators, and by Theorem 3.2.10
in \cite{L} (cf. also Theorem 5.5.18 in \cite{LL}), these operators
generate a vertex algebra which we call $E''_{r}$.  The injectivity
shows that $E$ is embedded in $E''_{r}$.
\end{proof}

We now address the situation without using representation theory.  It
is tempting to simply try to directly adjoin a vacuum vector
$\textbf{1}$ to an ertex algebra $E$ to get a vertex algebra $V$, but
if this were possible we would necessarily have the {\it strong
creation property} (cf. (3.1.29) of \cite{LL}), which says that for
all $v \in V$,
\begin{align}
\label{strcr}
Y(v,x)\textbf{1}=e^{x\mathcal{D}}v.
\end{align}
Therefore, before adjoining a vacuum vector we might first close up
$E$ under some sort of linear map that will become $\mathcal{D}$.  Of
course, after closing $E$ under such an operator we may already have a
vertex algebra.

As a preliminary result let us consider the case in which we have an
ertex algebra which already has a linear map which has certain of the
properties of a derivative operator.

\begin{defi}
\label{def:Dert}
A $D$-ertex algebra is an ertex algebra $(E',Y')$ with a linear map 
\begin{align*}
D:E'
\rightarrow E'
\end{align*}
such that
\begin{align}
\label{D-Bracketexp1}
Y'(e^{zD}u,x)v=Y'(u,x+z)v,
\end{align}
\begin{align}
\label{D-Bracketexp2}
e^{zD}Y'(u,x)v=Y'(u,x+z)e^{zD}v,
\end{align}
and
\begin{align}
\label{skewsymm}
Y'(u,x)v=e^{xD}Y'(v,-x)u,
\end{align}
for all $u,v \in E'$.
\end{defi}
We may sometimes refer to a $D$-ertex algebra simply by the name of
the underlying vector space, but for more precision we shall also
sometimes refer to it using a triple such as $(E',Y',D)$.
\begin{remark} \rm
\label{rem:Dproperties}
The properties involving $D$ in Definition \ref{def:Dert}
are all well-known properties of any vertex algebra.  The first one
(\ref{D-Bracketexp1}) is the global form of the {\it
$\mathcal{D}$-derivative property} (cf. (3.1.28) in \cite{LL}), the
second one (\ref{D-Bracketexp2}) is the global form of the {\it
$\mathcal{D}$-bracket derivative formula} (cf. (3.1.35) in \cite{LL}),
and the third one (\ref{skewsymm}) is {\it skew-symmetry}
(cf. (3.1.30) in \cite{LL}).  We note that (\ref{D-Bracketexp1}) and
(\ref{D-Bracketexp2}) are equivalent to (any two of) three
``infinitesimal'' properties called the $\mathcal{D}$-derivative
property (cf. (3.1.25) in \cite{LL}) and $\mathcal{D}$-bracket
derivative formulas (cf. (3.1.32) and (3.1.33) in \cite{LL}), which
together say that $[\mathcal{D},Y(v,x)], \frac{d}{dx}Y(v,x)$ and
$Y(\mathcal{D}v,x)$ are all equal.  We have used two well-known global
forms encoding the same information in the official definition because
they seem convenient in our proofs.
\end{remark}
\begin{remark} \rm
\label{rem:Deq}
In Definition \ref{def:Dert} we only need
(\ref{skewsymm}) together with either one of (\ref{D-Bracketexp1}) or
(\ref{D-Bracketexp2}).  To show this we note that
\begin{align*}
Y'(u,x)e^{-yD}v&=e^{xD}Y'(e^{-yD}v,-x)u\\
&=e^{xD}Y'(v,-x-y)u\\
&=e^{xD}e^{(-x-y)D}Y'(u,x+y)v\\
&=e^{-yD}Y'(u,x+y)v,
\end{align*}
where the first and third equality follow from (\ref{skewsymm}) and
the second one from (\ref{D-Bracketexp1}).  This shows that we may
remove (\ref{D-Bracketexp2}) from Definition \ref{def:Dert}, and
cyclically switching the order of this calculation shows that we may
alternatively remove (\ref{D-Bracketexp1}) from Definition
\ref{def:Dert}.
\end{remark}

\begin{prop}
\label{prop:Dto1}
Given any $D$-ertex algebra there is a canonical embedding of it into
a vertex algebra which has codimension one.
\end{prop}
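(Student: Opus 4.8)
The plan is to build the vertex algebra on the space $V = E' \oplus \mathbb{C}\mathbf{1}$, obtained by adjoining a single new vector $\mathbf{1}$ (hence the codimension one), and to extend $Y'$ to a vertex operator map $Y$ on $V$ in the only way the vacuum axioms permit. The vacuum property forces $Y(\mathbf{1},x) = 1$, and the strong creation property (\ref{strcr}) together with the requirement that $D$ become the derivative operator forces $Y(u,x)\mathbf{1} = e^{xD}u$ for $u \in E'$. First I would therefore \emph{define}, for $a,b \in E'$ and $\alpha,\beta \in \mathbb{C}$,
\[ Y(a + \alpha\mathbf{1}, x)(b + \beta\mathbf{1}) = Y'(a,x)b + \beta\, e^{xD}a + \alpha(b + \beta\mathbf{1}), \]
and then check the easy axioms: truncation $Y(u,x)v \in V((x))$ holds since $Y'(a,x)b \in E'((x))$ and $e^{xD}a \in E'[[x]]$; the vacuum property is immediate; and the creation property $Y(u,x)\mathbf{1} \in V[[x]]$ with $Y(u,0)\mathbf{1} = u$ follows directly from the formula. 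I would also record that $\mathcal{D}v = v_{-2}\mathbf{1} = Dv$ for $v \in E'$, so that $D$ is indeed the derivative operator of $V$.

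The core of the work is the Jacobi identity (\ref{Jac}) on all of $V$. By trilinearity in $u$, $v$, and the vector being acted on, it suffices to treat the eight cases in which each of these is either in $E'$ or equal to $\mathbf{1}$. When all three lie in $E'$ the identity is just the original Jacobi identity for $E'$, since $Y(Y'(u,x_0)v, x_2) = Y'(Y'(u,x_0)v, x_2)$. Every case in which $u$ or $v$ equals $\mathbf{1}$ collapses, via $Y(\mathbf{1},x) = 1$ and $Y(\mathbf{1},x_0)v = v$, to the three-term delta-function identity (\ref{threetermdelta}), sometimes after rewriting $Y(e^{x_0 D}u, x_2)$ as $Y'(u, x_2 + x_0)$ using (\ref{D-Bracketexp1}) and invoking the substitution property of Proposition \ref{prop:deltasub}.

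The one substantive case is when the vector acted on is $\mathbf{1}$ and $u,v \in E'$. Here I would evaluate $Y(v,x_2)\mathbf{1} = e^{x_2 D}v$ and $Y(u,x_1)\mathbf{1} = e^{x_1 D}u$ by strong creation, then push the surviving operator through the exponential with (\ref{D-Bracketexp2}) to obtain $e^{x_2 D}Y'(u, x_1 - x_2)v$ from the first term and $e^{x_1 D}Y'(v, x_2 - x_1)u$ from the second; the right-hand side becomes $x_1^{-1}\delta(\frac{x_2+x_0}{x_1})\,e^{x_2 D}Y'(u, x_0)v$. Applying skew-symmetry (\ref{skewsymm}) to the second term rewrites $e^{x_1 D}Y'(v, x_2-x_1)u$ as $e^{x_2 D}Y'(u, x_1-x_2)v$, so that both left-hand terms share the common factor $e^{x_2 D}Y'(u, x_1-x_2)v$; the identity (\ref{threetermdelta}) combines them, and Proposition \ref{prop:deltasub} reconciles the expansion of $Y'(u, x_1 - x_2)v$ with that of $Y'(u,x_0)v$ under $x_1^{-1}\delta(\frac{x_2+x_0}{x_1})$. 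I expect the main obstacle to be exactly this bookkeeping of expansions: skew-symmetry produces $Y'(u, x_1-x_2)v$ with a different natural expansion than the first term carries, and forcing the two to agree so that (\ref{threetermdelta}) applies requires careful appeal to the delta-function substitution property, including the fact that the relevant products exist.

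Finally, for canonicity I would argue uniqueness. In \emph{any} vertex algebra $V$ containing $E'$ as a codimension-one subalgebra complementary to the vacuum, with $Y|_{E'} = Y'$ and $\mathcal{D}|_{E'} = D$, the vacuum property gives $Y(\mathbf{1},x) = 1$ and the strong creation property (\ref{strcr}) gives $Y(u,x)\mathbf{1} = e^{x\mathcal{D}}u = e^{xD}u$ for $u \in E'$. Together with bilinearity these determine $Y$ on all of $V$, so the extension constructed above is forced; hence the embedding is canonical and the resulting vertex algebra structure is unique.
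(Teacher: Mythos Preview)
Your proposal is correct and follows essentially the same route as the paper: the identical codimension-one construction $E'' = E' \oplus \mathbb{C}\mathbf{1}$ with the forced definitions (\ref{vac1}) and (\ref{vac2}), the same case analysis for the Jacobi identity, and the same reliance on (\ref{D-Bracketexp1}), (\ref{D-Bracketexp2}), (\ref{skewsymm}), and Proposition~\ref{prop:deltasub}. The only minor differences are that in the substantive case (third vector equal to $\mathbf{1}$) the paper applies delta substitution \emph{before} combining terms---replacing $x_1-x_2$ by $x_0$ immediately so the expansion bookkeeping you flag never arises---and that the paper defers the canonicity discussion to Proposition~\ref{U1} rather than including it here.
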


\begin{proof}
Let $(E',Y',D)$ be a $D$-ertex algebra and let
\begin{align*}
E''=E'\oplus C\textbf{1}, 
\end{align*}
where $\textbf{1}$ will become the
vacuum vector.

Let 
\begin{align*}
Y''(\cdot,x):E'' \otimes E'' \rightarrow E''[[x,x^{-1}]]
\end{align*}
be the unique linear map that extends $Y'(\cdot,x)$ on $E' \otimes E'$
and that satisfies
\begin{align}
\label{vac1}
Y''(u,x)\textbf{1}=e^{xD}u
\qquad \text{for all} \quad u \in E',
\end{align}
(where we identify $E'$ as a subspace of $E''$) and
\begin{align}
\label{vac2}
Y''(\textbf{1},x)u=u \qquad \text{for all} \quad u \in E''.
\end{align}

It is clear that all we need to show is that $E''$ together with $Y''$
is a vertex algebra.  We first show that $Y''$ is an ertex algebra.
By linearity, (\ref{vac1}) and (\ref{vac2}), together with the
truncation property on $E'$, we get the truncation property.

By linearity and since $E'$ is an ertex algebra, in order to check the
Jacobi identity, we only have to check the case where one or more of
the three vectors is ${\bf 1}$.  If $u={\bf 1}$ in (\ref{Jac}) then
the result follows from (\ref{vac2}) and (\ref{threetermdelta}).  If
$v={\bf 1}$ then by (\ref{vac1}) and (\ref{vac2}) the Jacobi identity
reduces to:
\begin{align*}
x_{0}^{-1}\delta\left(\frac{x_{1}-x_{2}}{x_{0}}\right)Y''(u,x_{1})&-
x_{0}^{-1}
\delta\left(\frac{-x_{2}+x_{1}}{x_{0}}\right)Y''(u,x_{1})\\ 
&=x_{1}^{-1}\delta\left(\frac{x_{2}+x_{0}}{x_{1}}\right)Y''(e^{x_{0}D}u,x_{2}),
\end{align*}
which in turn, by (\ref{twotermdelta}), reduces to
\begin{align*}
x_{1}^{-1}\delta\left(\frac{x_{2}+x_{0}}{x_{1}}\right)Y''(u,x_{1})
&=x_{1}^{-1}\delta\left(\frac{x_{2}+x_{0}}{x_{1}}\right)Y''(e^{x_{0}D}u,x_{2}),
\end{align*}
which, assuming we are not acting against ${\bf 1}$, follows by
Proposition \ref{prop:deltasub} and (\ref{D-Bracketexp1}).  And if we
are acting against ${\bf 1}$ then by (\ref{vac1}) we need
\begin{align*}
x_{1}^{-1}\delta\left(\frac{x_{2}+x_{0}}{x_{1}}\right)e^{x_{1}D}u
&=x_{1}^{-1}\delta\left(\frac{x_{2}+x_{0}}{x_{1}}\right)e^{(x_{2}+x_{0})D}u,
\end{align*}
which follows by Proposition \ref{prop:deltasub}.

Therefore for $u,v \in E'$ identified as elements of $E''$, we need
only check
\begin{align*}
x_{0}^{-1}\delta\left(\frac{x_{1}-x_{2}}{x_{0}}\right)
Y''(u,x_{1})Y''(v,x_{2}){\bf 1}&-
x_{0}^{-1}
\delta\left(\frac{-x_{2}+x_{1}}{x_{0}}\right)
Y''(v,x_{2})Y''(u,x_{1}){\bf 1} \nonumber\\
&=x_{1}^{-1}\delta\left(\frac{x_{2}+x_{0}}{x_{1}}\right)
Y''(Y''(u,x_{0})v,x_{2}){\bf 1},
\end{align*}
which, by (\ref{vac1}), reduces to 
\begin{align*}
x_{0}^{-1}\delta\left(\frac{x_{1}-x_{2}}{x_{0}}\right)
Y'(u,x_{1})e^{x_{2}D}v&-
x_{0}^{-1}
\delta\left(\frac{-x_{2}+x_{1}}{x_{0}}\right)
Y'(v,x_{2})e^{x_{1}D}u \nonumber\\
&=x_{1}^{-1}\delta\left(\frac{x_{2}+x_{0}}{x_{1}}\right)
e^{x_{2}D}Y'(u,x_{0})v,
\end{align*}
which, by Proposition \ref{prop:deltasub} and (\ref{D-Bracketexp2}), 
reduces to
\begin{align*}
x_{0}^{-1}\delta\left(\frac{x_{1}-x_{2}}{x_{0}}\right)
e^{x_{2}D}Y'(u,x_{0})v
&-x_{1}^{-1}\delta\left(\frac{x_{2}+x_{0}}{x_{1}}\right)
e^{x_{2}D}Y'(u,x_{0})v\\
&=
x_{0}^{-1}
\delta\left(\frac{-x_{2}+x_{1}}{x_{0}}\right)
Y'(v,x_{2})e^{x_{1}D}u,
\end{align*}
which by (\ref{twotermdelta}) (checking carefully that the left hand
expression is well-defined in the sense that all coefficients of
monomials are finitely computable) reduces to 
\begin{align*}
x_{0}^{-1}
\delta\left(\frac{-x_{2}+x_{1}}{x_{0}}\right)
e^{x_{2}D}Y'(u,x_{0})v
=
x_{0}^{-1}
\delta\left(\frac{-x_{2}+x_{1}}{x_{0}}\right)
Y'(v,x_{2})e^{x_{1}D}u,
\end{align*}
which by Proposition \ref{prop:deltasub} and taking
$\text{Res}_{x_{0}}$ reduces to
\begin{align*}
e^{x_{2}D}Y'(u,-x_{2}+x_{1})v
=
Y'(v,x_{2})e^{x_{1}D}u,
\end{align*}
which by (\ref{D-Bracketexp1}) reduces to 
\begin{align*}
e^{x_{2}D}Y'(e^{x_{1}D}u,-x_{2})v
=
Y'(v,x_{2})e^{x_{1}D}u,
\end{align*}
which follows from (\ref{skewsymm}).

The vacuum property and creation property both follow immediately from
(\ref{vac1}) and (\ref{vac2}).
\end{proof}
\begin{remark} \rm
We note, as in the case of a rng being canonically embedded into a
ring, that even if $E'$ is already a vertex algebra, we still add
a new vacuum vector so that $E'$ indeed always has codimension $1$.
\end{remark}
\begin{remark} \rm
It is easy to see by (\ref{vac1}) that the map $D$ on $E'$ extends to
the derivative operator on $E''$.
\end{remark}

We would like to be able to embed ertex algebras into $D$-ertex
algebras.  Of course, we have a non-canonical method using the
representation theoretic proof of Theorem \ref{main}.  We could also
embed an injective ertex algebra more directly using a representation
theoretic proof by closing up the weak vertex operator space under
derivatives instead of generating a larger space by adding the
identity.  We shall do this below for interest's sake, but we note
that it is unclear whether or not the resulting $D$-ertex algebra is
injective.  As noted below in Remark \ref{Drepnotcanon} this makes
fail a naive attempt at proving whether or not this construction is
canonical.  I leave to the reader the question of whether or not it is
canonical.  In the next section we give a canonical embedding of an
injective ertex algebra into a $D$-ertex algebra using a proof which
does not rely on representation theory and one of the key properties
of the construction is that the $D$-ertex algebra of this construction
is, in fact, injective.

\begin{corollary}
\label{cor:main}
Every injective ertex algebra may be embedded into a $D$-ertex algebra.
\end{corollary}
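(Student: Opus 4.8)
The plan is to deduce this immediately from Theorem \ref{main} by observing that every vertex algebra is already a $D$-ertex algebra. First I would invoke Theorem \ref{main} to embed the given injective ertex algebra $E$ into a vertex algebra $(V,Y,\mathbf{1})$; concretely, $V=E''_{r}$ is the vertex algebra generated by the mutually local weak vertex operators arising from the adjoint representation of $E$, and this embedding preserves the vertex operator map, so $E$ sits inside $V$ as a sub-ertex algebra.

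Next I would equip $V$ with the derivative operator $\mathcal{D}$ and check that $(V,Y,\mathcal{D})$ is a $D$-ertex algebra in the sense of Definition \ref{def:Dert}. This is where Remark \ref{rem:Dproperties} does all the work: the three defining identities (\ref{D-Bracketexp1}), (\ref{D-Bracketexp2}), and (\ref{skewsymm}) are precisely the global forms of the $\mathcal{D}$-derivative property, the $\mathcal{D}$-bracket derivative formula, and skew-symmetry, each of which is a standard property of any vertex algebra (cf. (3.1.28), (3.1.35), and (3.1.30) in \cite{LL}). Since $E$ is embedded as an ertex algebra in $V$ and $V$ is a $D$-ertex algebra, the corollary follows.

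I do not expect a genuine obstacle here, since the result is a direct corollary: the only content is the translation, recorded in Remark \ref{rem:Dproperties}, between the $D$-ertex axioms and familiar vertex algebra identities. The one point worth flagging is that this construction is not canonical and, moreover, it produces a $D$-ertex algebra $V$ whose vertex operator map need not be injective, so it does not settle the sharper question of embedding into an injective $D$-ertex algebra. For interest's sake one could instead give a more direct representation-theoretic construction, closing the space of weak vertex operators under the formal derivative rather than adjoining a vacuum; but this alternative carries the same injectivity defect, which is why the genuinely canonical (and injectivity-preserving) construction is deferred to the next section.
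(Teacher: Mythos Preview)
Your core argument is correct and matches the paper's first proof exactly: invoke Theorem~\ref{main} to embed $E$ into a vertex algebra, then observe via Remark~\ref{rem:Dproperties} that every vertex algebra is already a $D$-ertex algebra. The paper states this in one line and then, as you also anticipate, gives a second representation-theoretic proof by closing the adjoint image under formal derivatives to obtain the space $E'_{r}$.

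One small inaccuracy in your commentary: the vertex algebra $V=E''_{r}$ produced by Theorem~\ref{main} \emph{does} have an injective vertex operator map, since injectivity follows from the creation property in any vertex algebra (as the paper notes just after the definition). The injectivity concern you raise applies only to the alternative construction $E'_{r}$ obtained by closing under derivatives without adjoining a vacuum; it is for that space that the paper remarks injectivity is unclear (see Remark~\ref{Drepnotcanon}). So your flag is apt, but it should be attached to the second construction rather than the first.
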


\begin{proof}
The result follows immediately from Theorem \ref{main}, but we give a
second proof.

Let $E$ be an injective ertex algebra.  Consider its adjoint
representation which is faithful because of the injectivity of $E$.
Take the linear span of all the higher derivatives of the weak vertex
operators in the adjoint representation and call this space $E'_{r}$.
We claim that this vector space is a $D$-ertex algebra of weak vertex
operators.  Of course, if the claim is true then $E'_{r}$ is the
smallest $D$-ertex algebra of weak vertex operators containing the
adjoint image of $E$ which is in $(\mathcal{E}(E), Y_{\mathcal{E}},
1_{E})$, which by Proposition 5.3.9 of \cite{LL} is a weak vertex
algebra.  By smallest we mean that it is contained in all other such
$D$-ertex algebras or in other words that it is the $D$-ertex algebra
generated by the adjoint image of $E$.

To show this claim we first show that if $a(x)$ and $b(x)$ are
mutually local then $a'(x)$ and $b(x)$ are mutually local.  There
exists some $k \geq 0$ so that
\begin{align*}
(x_{1}-x_{2})^{k}a(x_{1})b(x_{2})=(x_{1}-x_{2})^{k}b(x_{2})a(x_{1})
\end{align*}
and taking $\frac{\partial}{\partial x_{1}}$ of both sides gives
\begin{align*}
k(x_{1}-x_{2})^{k-1}a(x_{1})b(x_{2})&+
(x_{1}-x_{2})^{k}a'(x_{1})b(x_{2})\\
&=
k(x_{1}-x_{2})^{k-1}b(x_{2})a(x_{1})+
(x_{1}-x_{2})^{k}b(x_{2})a'(x_{1}),
\end{align*}
so that
\begin{align*}
(x_{1}-x_{2})^{k+1}a'(x_{1})b(x_{2})=(x_{1}-x_{2})^{k+1}b(x_{2})a'(x_{1}).
\end{align*}
Thus by induction $E'_{r}$ is a mutually local subspace of
$(\mathcal{E}(E), Y_{\mathcal{E}}, 1_{E})$.  Therefore $E'_{r}$ is
embedded in a vertex subalgebra of $(\mathcal{E}(E), Y_{\mathcal{E}},
1_{E})$ by Theorem 3.2.10 in \cite{L} (cf. also Theorem 5.5.18 in
\cite{LL}).  Moreover the $\mathcal{D}$-bracket and derivative
properties of $(\mathcal{E}(E), Y_{\mathcal{E}}, 1_{E})$ show, by
induction, that $E'_{r}$ is closed under multiplication.  Since
$E'_{r}$ is also obviously closed under the derivative operator then
$E'_{r}$ is a $D$-ertex algebra.

Finally, since $E$ is injective therefore $E$ is embedded in $E'_{r}$.
\end{proof}

\section{Main result}
\label{sec:main}
Let $(E,Y)$ be an injective ertex algebra.  We shall embed $(E,Y)$
into an injective $D$-ertex algebra $(E',Y',D)$.  Our construction
will be basis dependent in this section, but in Theorem \ref{U2} we
give a basis-free characterization of $(E',Y',D)$.  Proposition
\ref{prop:Dto1} motivates our approach.  Then by invoking Proposition
\ref{prop:Dto1}, we canonically embed $E'$ and hence $E$ into a vertex
algebra.

Let $\{e_{i}\}$, where $i$ ranges over an index set $I$, be a basis of
$E$.  Let 
\begin{align*}
S=\{D^{[n]}e_{i}|i \in I, n\geq 0\}.  
\end{align*}
We emphasize that the symbols $D^{[n]}e_{i}$ are simply the names of a
doubly-indexed set and that, though intentionally suggestive, the
notation does not denote operators acting on a set.  For convenience
we shall abuse notation and sometimes identify $D^{[0]}e_{i}$ with
$e_{i}$.  Let $\bar{S}$ be the complex vector space with basis $S$.

\begin{defi} \rm
Let $\bar{Y}(\cdot,x)\cdot:\bar{S} \otimes E \rightarrow
E[[x,x^{-1}]]$ be the unique linear map satisfying
\begin{align*}
\bar{Y}(D^{[n]}e_{i},x)e_{j}=\left(\frac{d}{dx}\right)^{n}Y(e_{i},x)e_{j},
\end{align*}
for all $n \geq 0$ $i,j \in I$.
\end{defi}

Let $A$ be the collection of subsets of $S$ containing
$\{D^{[0]}e_{i}|i \in I\}$ satisfying the following property: if $R
\in A$ and if $m$ is a nonzero linear combination of elements of $R$,
regarding $m$ as an element of $\bar{S}$, we require that
$\bar{Y}(m,x)e=0$ for all $e \in E$.  Since $E$ is injective, $A$ is
nonempty.  Let $T$ be a simply ordered (by set inclusion)
sub-collection of $A$.  It is easy to see that the union of elements
of elements of $T$ is still an element of $A$.  Therefore, by Zorn's
lemma, $A$ has a maximal element.  Take such a maximal element $M$ and
let its elements give a basis for a vector space that we call $E'$.
\begin{lemma}
If $D^{[n]}e_{i} \notin M$ then there exists a unique element in $E'$
which we shall call $\bar{D}^{[n]}e_{i}$, such that
\begin{align*}
\bar{Y}(D^{[n]}e_{i}-\bar{D}^{[n]}e_{i},x)=0
\end{align*}
when acting on $E$. 
\end{lemma}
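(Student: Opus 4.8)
The plan is to recast the statement as a linear-algebra fact about the kernel of $\bar{Y}$. Let $K$ denote the set of all $m \in \bar{S}$ with $\bar{Y}(m,x)e = 0$ for all $e \in E$; this is a linear subspace of $\bar{S}$, namely the kernel of the linear map $\bar{S} \to \mathrm{Hom}(E, E[[x,x^{-1}]])$ sending $m$ to $\bar{Y}(m,\cdot)$. Unwinding the defining property of the collection $A$, a subset $R \supseteq \{D^{[0]}e_i\}$ lies in $A$ precisely when no nonzero linear combination of its elements lies in $K$; in particular, since $M \in A$ and the elements of $M$ form a basis of $E'$, we obtain the identity $E' \cap K = \{0\}$. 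The two facts I would use throughout are this last identity and the maximality of $M$ in $A$.

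For existence, fix $D^{[n]}e_i \notin M$. Since $M \cup \{D^{[n]}e_i\}$ properly contains $M$, still contains $\{D^{[0]}e_i\}$, and $M$ is maximal in $A$, the enlarged set cannot lie in $A$. Hence some nonzero linear combination $m$ of elements of $M \cup \{D^{[n]}e_i\}$ lies in $K$. I would write $m = c\,D^{[n]}e_i + m_0$ with $c \in \mathbb{C}$ and $m_0 \in E'$. If $c$ were $0$, then $m = m_0$ would be a nonzero element of $E' \cap K$, contradicting $E' \cap K = \{0\}$, so necessarily $c \neq 0$. Setting $\bar{D}^{[n]}e_i := -c^{-1}m_0 \in E'$ then gives $D^{[n]}e_i - \bar{D}^{[n]}e_i = c^{-1}m \in K$, which is exactly the desired relation $\bar{Y}(D^{[n]}e_i - \bar{D}^{[n]}e_i, x) = 0$ on $E$.

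For uniqueness, suppose $w, w' \in E'$ both satisfy $\bar{Y}(D^{[n]}e_i - w, x) = 0$ and $\bar{Y}(D^{[n]}e_i - w', x) = 0$ on $E$. Subtracting these relations gives $\bar{Y}(w - w', x) = 0$ on $E$, so $w - w' \in K$; but also $w - w' \in E'$, whence $w - w' \in E' \cap K = \{0\}$ and $w = w'$. I do not expect a serious obstacle: the entire argument is the vertex-algebraic shadow of the elementary fact that a maximal linearly independent family spans everything modulo the kernel, so that each symbol $D^{[n]}e_i$ is congruent modulo $K$ to a unique combination of the chosen basis $M$. The one point deserving care is checking that the combination produced by maximality genuinely involves $D^{[n]}e_i$ with nonzero coefficient, and this is precisely where the membership $M \in A$ (in the form $E' \cap K = \{0\}$) enters.
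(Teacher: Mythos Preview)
Your proof is correct and is essentially the same argument the paper intends: the paper's one-line proof (``existence from maximality of $M$, uniqueness from the restricting condition on elements of $A$'') is exactly your argument with the details suppressed. Your reformulation via the kernel $K$ and the identity $E'\cap K=\{0\}$ is a clean way to make explicit what the paper leaves implicit, and your care in checking that the coefficient $c$ of $D^{[n]}e_i$ is nonzero is precisely the step the paper absorbs into the phrase ``maximality of $M$.''
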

\begin{proof}
The existence follows from the maximality of $M$ and the uniqueness
follows from the restricting condition on all elements of $A$.
\end{proof}
\begin{defi}
Let $D$ be the unique linear operator on $E'$ which satisfies
\begin{align*}
   D(D^{[n]}e_{i}) = \left\{
     \begin{array}{lr}
       D^{[n+1]}e_{i} &  D^{[n+1]}e_{i} \in M\\
       \bar{D}^{[n+1]}e_{i} & \text{otherwise}
     \end{array}
   \right.
\end{align*}
for all $D^{[n]}e_{i} \in M$.
\end{defi}

\begin{defi}
Let $Y'(\cdot,x)\cdot :E' \otimes E' \rightarrow E'[[x,x^{-1}]]$ be
the unique linear map which satisfies
\begin{align}
\label{eq:Y'}
Y'(D^{[n]}e_{i},x)D^{[m]}e_{j}=
\left(\frac{d}{dx}\right)^{n}\left(D-\frac{d}{dx}\right)^{m}Y(e_{i},x)e_{j},
\end{align}
whenever $D^{[n]}e_{i}$ and $D^{[m]}e_{j} \in M$.
\end{defi}
We note that (\ref{eq:Y'}) may be rewritten as
\begin{align}
Y'(D^{[n]}e_{i},x)D^{[m]}e_{j}=
\text{Res}_{z}\text{Res}_{y}
z^{-n-1}y^{-m-1}
e^{z\frac{d}{dx}+y\left(D-\frac{d}{dx}\right)}
Y(e_{i},x)e_{j} \nonumber\\
=
\text{Res}_{z}\text{Res}_{y}
z^{-n-1}y^{-m-1}
e^{yD}
Y(e_{i},x+z-y)e_{j}. \label{genY'}
\end{align}
\begin{lemma}
\label{lem:EtoE'}
If $Y'(u,x)=Y'(v,x)$ where $u,v \in E'$ when acting on $E$ then
$Y'(u,x)=Y'(v,x)$ (when acting on $E'$).
\end{lemma}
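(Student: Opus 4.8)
The plan is to reduce the equality of operators on $E'$ to a single ``transfer identity'' that expresses the action of $Y'(\cdot,x)$ on an arbitrary basis vector $D^{[m]}e_j$ of $E'$ in terms of its action on $e_j\in E$. First I would use linearity of $Y'$ in its first argument: setting $w=u-v\in E'$, the hypothesis $Y'(u,x)=Y'(v,x)$ on $E$ says exactly that $Y'(w,x)e=0$ for all $e\in E$, and since $\{e_j\}_{j\in I}$ is a basis of $E$ this is the same as $Y'(w,x)e_j=0$ for every $j$. The desired conclusion, $Y'(u,x)=Y'(v,x)$ on $E'$, is likewise equivalent to $Y'(w,x)=0$ on $E'$; and because $M$ is a basis of $E'$ and each of its elements has the form $D^{[m]}e_j$, it suffices to prove $Y'(w,x)D^{[m]}e_j=0$ for every $D^{[m]}e_j\in M$.

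The key step is the identity
\[
Y'(w,x)D^{[m]}e_j=\left(D-\frac{d}{dx}\right)^m\left(Y'(w,x)e_j\right),\qquad w\in E',\ D^{[m]}e_j\in M.
\]
I would first verify this for a basis vector $w=D^{[n]}e_i\in M$ straight from (\ref{eq:Y'}). The right-hand side of (\ref{eq:Y'}) is $(\frac{d}{dx})^n(D-\frac{d}{dx})^mY(e_i,x)e_j$, while the $m=0$ case of (\ref{eq:Y'}) gives $Y'(D^{[n]}e_i,x)e_j=(\frac{d}{dx})^nY(e_i,x)e_j$; so the identity for this $w$ is exactly the statement that $(\frac{d}{dx})^n$ commutes past $(D-\frac{d}{dx})^m$. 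The general case then follows by linearity in the first slot, since both sides of the displayed identity are linear in $w$ (the operator $(D-\frac{d}{dx})^m$ being fixed) and they agree on the basis $M$.

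The one point that needs genuine care, and which I expect to be the main (if modest) obstacle, is the commutativity underpinning that reordering. In the formula, $D$ denotes the linear operator on $E'$ applied coefficientwise to a series, whereas $\frac{d}{dx}$ differentiates the formal variable; acting on $\sum_n a_nx^n$ with $a_n\in E'$ both $D\frac{d}{dx}$ and $\frac{d}{dx}D$ return $\sum_n n\,D(a_n)x^{n-1}$, so $[D,\frac{d}{dx}]=0$ and hence $(\frac{d}{dx})^n$ commutes with $(D-\frac{d}{dx})^m$. I would also record the bookkeeping that makes every term well defined: for each basis vector $w$ one has $Y'(w,x)e_j\in E((x))\subseteq E'[[x,x^{-1}]]$ by the truncation property, and applying $(D-\frac{d}{dx})^m$ keeps the result in $E'[[x,x^{-1}]]$, so the two sides live in the same space.

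With the transfer identity in hand the conclusion is immediate: under the hypothesis $Y'(w,x)e_j=0$ for all $j$, the right-hand side vanishes, so $Y'(w,x)D^{[m]}e_j=\left(D-\frac{d}{dx}\right)^m(0)=0$ for every $D^{[m]}e_j\in M$. Thus $Y'(w,x)$ annihilates the basis $M$ of $E'$, giving $Y'(w,x)=0$ on $E'$, i.e.\ $Y'(u,x)=Y'(v,x)$ on $E'$, as desired.
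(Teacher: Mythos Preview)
Your proof is correct and follows essentially the same approach as the paper: both establish the transfer identity $Y'(w,x)D^{[m]}e_{j}=\bigl(D-\tfrac{d}{dx}\bigr)^{m}Y'(w,x)e_{j}$ from the defining formula (\ref{eq:Y'}) and the commutation $[D,\tfrac{d}{dx}]=0$, and then conclude by linearity. Your reduction to $w=u-v$ and your explicit justification of the commutation step simply spell out details the paper leaves implicit.
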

\begin{proof}
We have
\begin{align*}
Y'(u,x)D^{[m]}e_{j}&=\left(D-\frac{d}{dx}\right)^{m}Y'(u,x)e_{j}\\
&=\left(D-\frac{d}{dx}\right)^{m}Y'(v,x)e_{j}\\
&=Y'(v,x)D^{[m]}e_{j},
\end{align*}
for all $D^{[m]}e_{j} \in M$ so that the result follows by linearity.
\end{proof}
\begin{remark} \rm
\label{rem:EtoE'}
Of course, a similar argument shows that if
$Y'(u,x)=\frac{d}{dx}Y'(v,x)$ where $u,v \in E'$ when acting on $E$
then $Y'(u,x)=\frac{d}{dx}Y'(v,x)$ (when acting on $E'$).
\end{remark}
\begin{lemma}
\label{lem:Y'inj}
If $Y'(u,x)=0$ then $u=0$ for all $u \in E'$.
\end{lemma}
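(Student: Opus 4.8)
The statement to prove is that the vertex operator map $Y'$ is injective: if $Y'(u,x) = 0$ (acting on all of $E'$) then $u = 0$. The plan is to reduce the problem from an arbitrary $u \in E'$ to the defining action on basis elements $e_j \in E$, where injectivity of the original ertex algebra $E$ can be exploited. By Lemma \ref{lem:EtoE'}, the vanishing $Y'(u,x) = 0$ on all of $E'$ is equivalent to the vanishing $Y'(u,x)e = 0$ for all $e \in E$, so I would work entirely with the action against $E$. Writing $u$ in the chosen basis $M$ of $E'$, I would group the basis vectors appearing in $u$ by their underlying index $i$, so that $u = \sum_i m_i$ where each $m_i$ is a linear combination of symbols $D^{[n]}e_i$ (identified with their representatives in $M$) sharing the same $i$.

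The key idea is that the operators $(\frac{d}{dx})^n$ applied to $Y(e_i,x)e_j$ produce, for distinct $n$, ``independent'' contributions that cannot cancel against one another. First I would use the formula \eqref{eq:Y'}, which gives $Y'(D^{[n]}e_i, x)e_j = (\frac{d}{dx})^n Y(e_i,x)e_j$ for $D^{[n]}e_i \in M$ (the case $m=0$). For a general element $m_i = \sum_n c_{n} D^{[n]}e_i$ supported on the index $i$, the condition $\bar{Y}(m_i, x)e = 0$ for all $e \in E$ is precisely the defining restriction on elements of the maximal set $M \in A$: since $M$ is a basis consisting of linearly independent symbols subject to no such relation, any nonzero combination $m_i$ of its elements must satisfy $\bar{Y}(m_i,x)e \neq 0$ for some $e$. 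Thus to finish I must show that if $\sum_i \bar{Y}(m_i,x)e_j = 0$ for all $j$, then each $\bar{Y}(m_i,x)e_j = 0$ separately, which then forces each $m_i = 0$ by the characterizing property of $M$, giving $u=0$.

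I expect the main obstacle to be disentangling the contributions from different indices $i$. The subtlety is that $Y(e_i,x)e_j$ for different $i$ need not be ``independent'' in any obvious sense, so one cannot naively separate the sum over $i$. The natural remedy is to observe that $\bar{Y}(m,x)e$ vanishing for all $e$ is exactly the membership condition defining $A$: the set $\{D^{[n]}e_i : \text{those appearing in } u\}$ together with $\{D^{[0]}e_i\}_{i\in I}$ would otherwise exhibit a nontrivial relation, contradicting maximality of $M$ unless the relation is already trivial. Concretely, if $Y'(u,x)e = 0$ for all $e \in E$, then regarding $u$ as a nonzero linear combination of elements of $M \subseteq S$, the condition $\bar{Y}(u,x)e = 0$ would place $u$'s support inside a set violating the defining property of $A$ unless $u = 0$. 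So the crux is to verify that $Y'(u,x)e_j = \bar{Y}(u,x)e_j$ when $u$ is built from the $m=0$ basis vectors, and then invoke the restricting condition on $A$ directly: since $M \in A$, no nonzero combination of its elements can annihilate all of $E$, hence $u = 0$.
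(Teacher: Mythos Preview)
Your proposal is correct and, by its final paragraph, arrives at exactly the paper's argument: on $E$ the map $Y'$ agrees with $\bar{Y}$ (this is the $m=0$ case of \eqref{eq:Y'}), and then the defining condition on $M \in A$ says precisely that no nonzero linear combination of elements of $M$ can satisfy $\bar{Y}(u,x)e = 0$ for all $e \in E$, forcing $u = 0$. The earlier two paragraphs---decomposing $u = \sum_i m_i$ by the index $i$ and worrying about disentangling contributions from different $i$---are an unnecessary detour that you yourself abandon; the restrictive condition on $A$ already handles arbitrary linear combinations across all indices at once, so no separation by $i$ is needed. One small wording slip: where you write ``when $u$ is built from the $m=0$ basis vectors,'' you mean ``when acting on vectors $e_j \in E$ (i.e.\ the $m=0$ case of the second argument in \eqref{eq:Y'})''; $u$ itself may involve $D^{[n]}e_i$ with $n>0$.
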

\begin{proof}
When acting on $E$ we have $Y'(u,x)=\bar{Y}(u,x)$.  Therefore the
result follows by the restrictive condition in the definition of $A$.
\end{proof}
\begin{lemma}
Let $u,v \in E'$.  Then $\bar{Y}(Du,x)=\frac{d}{dx}\bar{Y}(u,x)$ (when
acting on $E$).
\end{lemma}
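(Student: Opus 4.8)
The plan is to reduce the identity to basis elements and then unwind the definitions of $D$ and $\bar{Y}$. Since $\bar{Y}(\,\cdot\,,x)$ is linear in its first argument and $E'$ has basis $M$, and since both sides are also linear in the vector of $E$ against which we act, it suffices to verify the identity for $u=D^{[n]}e_{i} \in M$ acting on an arbitrary basis vector $e_{j}$ of $E$; the general statement then follows by linearity.

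First I would compute the right-hand side directly from the definition of $\bar{Y}$. Since $\bar{Y}(D^{[n]}e_{i},x)e_{j}=\left(\frac{d}{dx}\right)^{n}Y(e_{i},x)e_{j}$, we have
\[
\frac{d}{dx}\bar{Y}(D^{[n]}e_{i},x)e_{j}=\left(\frac{d}{dx}\right)^{n+1}Y(e_{i},x)e_{j}=\bar{Y}(D^{[n+1]}e_{i},x)e_{j}.
\]
Thus it remains only to show that $\bar{Y}(D(D^{[n]}e_{i}),x)e_{j}$ equals this same expression $\bar{Y}(D^{[n+1]}e_{i},x)e_{j}$, and I would do this by splitting into the two cases in the definition of $D$. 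If $D^{[n+1]}e_{i} \in M$ then $D(D^{[n]}e_{i})=D^{[n+1]}e_{i}$ and equality is immediate. If $D^{[n+1]}e_{i} \notin M$ then $D(D^{[n]}e_{i})=\bar{D}^{[n+1]}e_{i}$, and here I would invoke the defining property of $\bar{D}^{[n+1]}e_{i}$, namely that $\bar{Y}(D^{[n+1]}e_{i}-\bar{D}^{[n+1]}e_{i},x)=0$ when acting on $E$; this gives $\bar{Y}(\bar{D}^{[n+1]}e_{i},x)e_{j}=\bar{Y}(D^{[n+1]}e_{i},x)e_{j}$, as required.

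This is in essence a bookkeeping argument, so I do not anticipate any serious obstacle. The one point demanding care is that $\bar{Y}$ is defined only as a map out of $\bar{S}\otimes E$, so every equation above must be read as an identity of operators acting on $E$; the substitution of $\bar{D}^{[n+1]}e_{i}$ for $D^{[n+1]}e_{i}$ is legitimate for exactly this reason, since $\bar{D}^{[n+1]}e_{i}$ was constructed precisely so as to leave $\bar{Y}(\,\cdot\,,x)$ unchanged on $E$. Linearity in both arguments then promotes the basis computation to the full statement for all $u \in E'$.
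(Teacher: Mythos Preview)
Your argument is correct and follows essentially the same approach as the paper: reduce to basis elements $D^{[n]}e_{i}\in M$, split into the two cases in the definition of $D$, and in the second case invoke the defining property of $\bar{D}^{[n+1]}e_{i}$ to replace it by $D^{[n+1]}e_{i}$ inside $\bar{Y}$. The paper presents this as a single chain of equalities rather than computing the right-hand side separately first, but the logical content is identical.
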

\begin{proof}
Consider $D^{[n]}e_{i} \in E'$.  If $D(D^{[n]}e_{i})=D^{[n+1]}e_{i}$
then we have
\begin{align*}
\bar{Y}(D(D^{[n]}e_{i}),x)
=\bar{Y}(D^{[n+1]}e_{i},x)
=\frac{d}{dx}\bar{Y}(D^{[n]}e_{i},x).
\end{align*}
Otherwise $D(D^{[n]}e_{i})=\bar{D}^{[n+1]}e_{i}$ and we have
\begin{align*}
\bar{Y}(D(D^{[n]}e_{i}),x)
=\bar{Y}(\bar{D}^{[n+1]}e_{i},x)
=\bar{Y}(D^{[n+1]}e_{i},x)
=\frac{d}{dx}\bar{Y}(D^{[n]}e_{i},x),
\end{align*}
so the result follows by linearity.
\end{proof}
\begin{lemma}
\label{lem:dder}
For all $u\in E'$, $Y'(Du,x)=\frac{d}{dx}Y'(u,x)$ (when
acting on $E'$).
\end{lemma}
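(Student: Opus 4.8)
The plan is to push everything down to the action on $E$, where the statement has effectively already been proved. By Remark \ref{rem:EtoE'}, in order to conclude that $Y'(Du,x)=\frac{d}{dx}Y'(u,x)$ as operators on $E'$, it is enough to establish this identity when both sides act on $E$; the remark, applied with the pair $Du,u \in E'$, then automatically promotes it to the action on all of $E'$. Moreover, since $D$, $Y'(\cdot,x)$ and $\frac{d}{dx}$ are all linear, it suffices to treat a single basis vector $u=D^{[n]}e_{i} \in M$.

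The first thing I would record is that $Y'$ and $\bar{Y}$ agree when acting on $E$. Indeed, testing against $e_{j}=D^{[0]}e_{j} \in M$ forces the exponent $m$ in the defining formula (\ref{eq:Y'}) to be zero, so the factor $\left(D-\frac{d}{dx}\right)^{m}$ disappears and $Y'(w,x)e_{j}=\left(\frac{d}{dx}\right)^{n}Y(e_{i},x)e_{j}=\bar{Y}(w,x)e_{j}$ for every basis vector $w=D^{[n]}e_{i}\in M$; by linearity $Y'(w,x)=\bar{Y}(w,x)$ on $E$ for all $w \in E'$. This is precisely the observation already exploited in the proof of Lemma \ref{lem:Y'inj}. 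Granting it, the computation is immediate: acting on $E$ we have $Y'(Du,x)=\bar{Y}(Du,x)$, the preceding lemma gives $\bar{Y}(Du,x)=\frac{d}{dx}\bar{Y}(u,x)$, and $\frac{d}{dx}\bar{Y}(u,x)=\frac{d}{dx}Y'(u,x)$ because $\bar{Y}(u,x)=Y'(u,x)$ on $E$; chaining these equalities yields $Y'(Du,x)=\frac{d}{dx}Y'(u,x)$ when acting on $E$. Remark \ref{rem:EtoE'} then finishes the proof.

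I do not anticipate a genuine obstacle here: the one place where real casework lives, namely whether $D^{[n+1]}e_{i}$ belongs to $M$ or must instead be read as the surrogate $\bar{D}^{[n+1]}e_{i}$, has already been handled inside the preceding lemma at the level of $\bar{Y}$, so it does not resurface. The only steps demanding any care are the verification that $Y'$ reduces to $\bar{Y}$ on $E$, which rests squarely on the $m=0$ case of (\ref{eq:Y'}), and the bookkeeping of applying Remark \ref{rem:EtoE'} in the correct direction, that is, passing from an identity valid on $E$ to the same identity valid on $E'$.
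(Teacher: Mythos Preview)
Your proof is correct and follows essentially the same route as the paper: reduce to the identity $\bar{Y}(Du,x)=\frac{d}{dx}\bar{Y}(u,x)$ on $E$ (the previous lemma), use that $Y'$ and $\bar{Y}$ agree on $E$, and then invoke Remark \ref{rem:EtoE'} to lift to all of $E'$. The only cosmetic difference is that the paper repeats the case split on whether $D^{[n+1]}e_{i}\in M$, handling the first case directly on $E'$ from the definition of $Y'$ and only lifting in the second case, whereas you delegate that casework entirely to the previous lemma and lift uniformly; your organization is if anything slightly cleaner.
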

\begin{proof}
Consider $D^{[n]}e_{i} \in E'$.  If $D(D^{[n]}e_{i})=D^{[n+1]}e_{i}$, we have
\begin{align*}
Y'(D(D^{[n]}e_{i}),x)
=Y'(D^{[n+1]}e_{i},x)
=\frac{d}{dx}Y'(D^{[n]}e_{i},x).
\end{align*}
Otherwise $D(D^{[n]}e_{i})=\bar{D}^{[n+1]}e_{i}$ and we have
\begin{align*}
Y'(D(D^{[n]}e_{i}),x)
=Y'(\bar{D}^{[n+1]}e_{i},x)
=\bar{Y}(\bar{D}^{[n+1]}e_{i},x)
=\frac{d}{dx}\bar{Y}(D^{[n]}e_{i},x)
=\frac{d}{dx}Y'(D^{[n]}e_{i},x)
\end{align*}
where some of these equalities hold only on when acting $E$.  Then the
result follows by Lemma \ref{lem:EtoE'} together with Remark
\ref{rem:EtoE'} and linearity.
\end{proof}
Lemma \ref{lem:dder}, together with the formal Taylor theorem,
gives for all $u \in E'$
\begin{align}
\label{eq:globdder}
Y'(e^{zD}u,x)=Y'(u,x+z).
\end{align}
\begin{lemma}
\label{lem:sksy}
For all $u',v' \in E'$ we have $Y'(u',x)v'=e^{xD}Y'(v',-x)u'$.  
\end{lemma}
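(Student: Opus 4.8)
The plan is to prove skew-symmetry first on the distinguished basis $M$ and then extend by bilinearity, since both sides of the asserted identity $Y'(u',x)v'=e^{xD}Y'(v',-x)u'$ are bilinear in $(u',v')$. So I would fix $u'=D^{[n]}e_{i}\in M$ and $v'=D^{[m]}e_{j}\in M$. The key observation is that (\ref{eq:Y'}) applies verbatim to \emph{both} $Y'(u',x)v'$ and $Y'(v',-x)u'$ (all four symbols $D^{[n]}e_{i},D^{[m]}e_{j}$ lie in $M$), so no $\bar{D}$-bookkeeping is needed here and the whole statement reduces to a formal manipulation once the base case $n=m=0$ is established. Thus the first step is the base case $Y(e_{i},x)e_{j}=e^{xD}Y(e_{j},-x)e_{i}$, and the second step propagates it to all $(n,m)$ by applying the single differential operator $(d/dx)^{n}(D-d/dx)^{m}$.

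For the base case I would use that two elements of $E'$ coincide as soon as the operators $Y'(\cdot,x_{2})$ they define agree when acting on $E$; this is immediate from Lemma \ref{lem:EtoE'} and Lemma \ref{lem:Y'inj}, applied to each coefficient in $x$. Hence it suffices to compare $Y'(Y(e_{i},x)e_{j},x_{2})e$ with $Y'(e^{xD}Y(e_{j},-x)e_{i},x_{2})e$ for $e\in E$. The left operator is simply the iterate $Y(Y(e_{i},x)e_{j},x_{2})e$ in $E$. For the right operator I would use the lemma that $\bar{Y}(Dw,x_{2})=\frac{d}{dx_{2}}\bar{Y}(w,x_{2})$ on $E$, iterated, together with the formal Taylor theorem, to move the exponential onto the variable: writing $R(x)=Y(e_{j},-x)e_{i}$ (whose coefficients lie in $E$), one gets $Y'(e^{xD}R(x),x_{2})e=e^{x\frac{d}{dx_{2}}}Y(R(x),x_{2})e=Y(R(x),x_{2}+x)e$. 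The base case then becomes exactly $Y(Y(e_{i},x)e_{j},x_{2})e=Y(Y(e_{j},-x)e_{i},x_{2}+x)e$, which is vacuum-free skew-symmetry (Proposition \ref{prop:vfss}) with $x_{0}=x$.

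For the general case, set $G(x)=Y(e_{i},x)e_{j}$, so the base case reads $G(x)=e^{xD}\tilde{H}(x)$ with $\tilde{H}(x)=Y(e_{j},-x)e_{i}$. By (\ref{eq:Y'}) we have $Y'(u',x)v'=(d/dx)^{n}(D-d/dx)^{m}G(x)$ directly. On the other side, substituting $w=-x$ into (\ref{eq:Y'}) gives $Y'(v',-x)u'=\left[(d/dw)^{m}(D-d/dw)^{n}Y(e_{j},w)e_{i}\right]_{w=-x}=(-1)^{m}(d/dx)^{m}(D+d/dx)^{n}\tilde{H}(x)$, and it remains to check that $e^{xD}$ applied to this equals $(d/dx)^{n}(D-d/dx)^{m}G(x)$. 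This is the one place requiring care, since $e^{xD}$ does not commute with $d/dx$. I would record the conjugation rule $\frac{d}{dx}\circ e^{xD}=e^{xD}\circ(D+\frac{d}{dx})$ (equivalently $e^{xD}\frac{d}{dx}=(\frac{d}{dx}-D)e^{xD}$ and $e^{xD}(D+\frac{d}{dx})=\frac{d}{dx}e^{xD}$, with $D$ central for $e^{xD}$), and push $e^{xD}$ to the right: this turns $(d/dx)^{m}$ into $(d/dx-D)^{m}$ and $(D+d/dx)^{n}$ into $(d/dx)^{n}$, after which $e^{xD}\tilde{H}=G$ collapses everything to $(D-d/dx)^{m}(d/dx)^{n}G(x)$, matching $Y'(u',x)v'$ once the two factors $(-1)^{m}$ cancel.

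The main obstacle is precisely this bookkeeping of the noncommutativity of $e^{xD}$ and $d/dx$ in the last step; everything else is routine once one notices that the only genuinely algebraic input, used solely in the base case, is vacuum-free skew-symmetry in $E$, while the propagation to arbitrary $(n,m)$ is purely formal. Minor points to verify along the way are the well-definedness of $e^{xD}Y'(v',-x)u'$ as a formal series (the inner term lies in $E'((x))$ and $e^{xD}$ involves only nonnegative powers of $x$, so each coefficient is a finite sum) and the legitimacy of the coefficientwise reduction used in the base case.
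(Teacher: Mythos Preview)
Your proof is correct and follows essentially the same two-step plan as the paper: establish skew-symmetry for $u,v\in E$ via vacuum-free skew-symmetry (Proposition~\ref{prop:vfss}) combined with Lemmas~\ref{lem:EtoE'} and~\ref{lem:Y'inj}, then propagate to arbitrary basis elements $D^{[n]}e_{i},D^{[m]}e_{j}\in M$. The only difference is in the mechanics of the second step: you work directly with the differential-operator form~(\ref{eq:Y'}) and track the conjugation $e^{xD}\frac{d}{dx}=(\frac{d}{dx}-D)e^{xD}$ by hand, whereas the paper uses the equivalent residue/generating-function form~(\ref{genY'}), where the same conjugation is hidden inside the relation $e^{yD}e^{(x+z-y)D}=e^{(x+z)D}$ and a relabeling of the residue variables. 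Both routes encode the same identity; the residue packaging just absorbs your bookkeeping into exponentials.
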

\begin{proof}
We have for all $u,v \in E$ when acting on $E$:
\begin{align*}
Y'(Y(u,x_{0})v,x_{2})
&=Y(Y(u,x_{0})v,x_{2})\\
&=Y(Y(v,-x_{0})u,x_{2}+x_{0})\\
&=e^{x_{0}\frac{d}{dx_{2}}}
Y(Y(v,-x_{0})u,x_{2})\\
&=e^{x_{0}\frac{d}{dx_{2}}}
Y'(Y(v,-x_{0})u,x_{2})\\
&=Y'(e^{x_{0}D}Y(v,-x_{0})u,x_{2})
\end{align*}
where the second equality follows from Proposition \ref{prop:vfss},
the third equality follows from the formal Taylor theorem and the fifth
equality follows from Lemma \ref{lem:dder}.  Then Lemma
\ref{lem:EtoE'} and Lemma \ref{lem:Y'inj} give
\begin{align*}
Y'(u,x)v=e^{xD}Y'(v,-x)u.
\end{align*}
Further recalling (\ref{genY'}) we get
\begin{align*}
Y'(D^{[n]}e_{i},x)D^{[m]}e_{j}
&=
\text{Res}_{z}\text{Res}_{y}
z^{-n-1}y^{-m-1}
e^{yD}
Y'(e_{i},x+z-y)e_{j}\\
&=\text{Res}_{z}\text{Res}_{y}
z^{-n-1}y^{-m-1}
e^{yD}e^{(x+z-y)D}
Y'(e_{j},-x-z+y)e_{i}\\
&=\text{Res}_{z}\text{Res}_{y}
z^{-n-1}y^{-m-1}
e^{(x+z)D}
Y'(e_{j},-x-z+y)e_{i}\\
&=e^{xD}
\text{Res}_{y}\text{Res}_{z}
y^{-m-1}z^{-n-1}e^{zD}Y'(e_{j},-x+y-z)e_{i}\\
&=e^{xD}Y'(D^{[m]}e_{j},-x)D^{[n]}e_{i},
\end{align*}
so that the result follows by linearity.
\end{proof}
\begin{lemma}
\label{lem:globdbrac}
For all $u,v \in E'$ we have $e^{yD}Y'(u,x)e^{-yD}v=Y'(u,x+y)v$.
\end{lemma}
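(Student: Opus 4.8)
The plan is to reproduce, in the present setting, the short computation already carried out in Remark \ref{rem:Deq}, which established precisely that skew-symmetry together with the global $\mathcal{D}$-derivative property (\ref{D-Bracketexp1}) force the $\mathcal{D}$-bracket derivative formula (\ref{D-Bracketexp2}). Both ingredients are now available for $(E',Y',D)$: skew-symmetry is Lemma \ref{lem:sksy}, and the global $\mathcal{D}$-derivative property is (\ref{eq:globdder}). Thus no genuinely new idea is needed; the content of the argument is to assemble these two facts and then verify that the formal manipulations are legitimate.

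Concretely, I would first rewrite $Y'(u,x)e^{-yD}v$. Applying skew-symmetry (Lemma \ref{lem:sksy}) with $v'=e^{-yD}v$ gives $Y'(u,x)e^{-yD}v=e^{xD}Y'(e^{-yD}v,-x)u$. Next, applying (\ref{eq:globdder}) in the first slot (with $z=-y$ and first-slot variable $-x$) turns this into $e^{xD}Y'(v,-x-y)u$. A second application of skew-symmetry, now with the compound formal variable $-x-y$ in place of $x$, yields $Y'(v,-x-y)u=e^{(-x-y)D}Y'(u,x+y)v$, whence $Y'(u,x)e^{-yD}v=e^{xD}e^{(-x-y)D}Y'(u,x+y)v=e^{-yD}Y'(u,x+y)v$, using $e^{xD}e^{(-x-y)D}=e^{-yD}$ since powers of $D$ commute. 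Left-multiplying both sides by $e^{yD}$ produces exactly $e^{yD}Y'(u,x)e^{-yD}v=Y'(u,x+y)v$ on the basis elements, and the general case follows by linearity.

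The one point requiring care — and the only real obstacle — is justifying the substitution of the compound formal variable $-x-y$ into the skew-symmetry identity of Lemma \ref{lem:sksy}, and then checking that the collapse $e^{xD}e^{(-x-y)D}=e^{-yD}$ holds coefficientwise against the series $Y'(u,x+y)v$. Since Lemma \ref{lem:sksy} is an identity of formal series in $x$ with coefficients in $E'$, and since the truncation property guarantees that $Y'(v,t)u$ and $Y'(u,-t)v$ each have only finitely many negative powers of $t$, the substitution $t=-x-y$ is well-defined under the binomial expansion convention of Section \ref{sec:main} (the new variable $y$ entering only in nonnegative powers), so all the indicated products exist and the exponentials combine as claimed. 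This is the same bookkeeping already used implicitly in Remark \ref{rem:Deq}, so I expect the verification to be routine rather than delicate.
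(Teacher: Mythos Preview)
Your proposal is correct and is exactly the approach the paper takes: the paper's proof is simply ``See Remark~\ref{rem:Deq},'' and you have reproduced that remark's computation in the present setting, using Lemma~\ref{lem:sksy} for skew-symmetry and (\ref{eq:globdder}) for the global $\mathcal{D}$-derivative property. Your added justification of the formal substitutions is more than the paper spells out, but it is harmless (and correct).
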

\begin{proof}
See Remark \ref{rem:Deq}.
\end{proof}

We have now shown that $E'$ has all of the properties specified in
Proposition \ref{prop:Dto1}, that is, assuming that $E'$ is indeed an
ertex algebra.  Showing that is then essentially our last task because
then we will have that $(E,Y)$ is obviously embedded in $(E',Y')$ and
can invoke Proposition \ref{prop:Dto1}.


\begin{theorem} \rm
\label{lem:E'ert}
The vector space $E'$ together with map $Y'(\cdot,x)$ is an ertex algebra.
\end{theorem}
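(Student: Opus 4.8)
The definition of ertex algebra asks two things of $(E',Y')$: the truncation property and the Jacobi identity. Truncation is immediate from (\ref{eq:Y'}): the series $Y'(D^{[n]}e_i,x)D^{[m]}e_j=\left(\frac{d}{dx}\right)^n\left(D-\frac{d}{dx}\right)^mY(e_i,x)e_j$ is obtained from $Y(e_i,x)e_j\in E((x))$ by applying $\frac{d}{dx}$ and the operator $D$, neither of which lowers the order in $x$, so it lies in $E'((x))$; the general case follows by bilinearity. For the Jacobi identity I would not verify it head-on. Since $E'$ is injective (Lemma \ref{lem:Y'inj}) and satisfies skew-symmetry (Lemma \ref{lem:sksy}), the replacement criterion of \cite{R} (there, weak commutativity together with vacuum-free skew-symmetry may replace the Jacobi identity for an injective ertex algebra) reduces the task to establishing \emph{weak commutativity}: for all $u,v\in E'$ there is an integer $k$ with $(x_1-x_2)^kY'(u,x_1)Y'(v,x_2)=(x_1-x_2)^kY'(v,x_2)Y'(u,x_1)$ as operators on $E'$.

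By bilinearity it suffices to treat $u=D^{[n]}e_i$ and $v=D^{[m]}e_j$. Iterating Lemma \ref{lem:dder} gives $Y'(D^{[n]}e_i,x_1)=\left(\frac{d}{dx_1}\right)^nY'(e_i,x_1)$, so the commutator equals $\left(\frac{d}{dx_1}\right)^n\left(\frac{d}{dx_2}\right)^m[Y'(e_i,x_1),Y'(e_j,x_2)]$. The plan is to evaluate this on an arbitrary basis vector $D^{[l]}e_k$ and transport the computation into $E$. The relation $Y'(w,x)D^{[l]}e_k=\left(D-\frac{d}{dx}\right)^lY'(w,x)e_k$ from the proof of Lemma \ref{lem:EtoE'} strips off the $D^{[l]}$, and pulling the outer operator $Y'(e_i,x_1)$ through the resulting $\left(D-\frac{d}{dx_2}\right)^l$ by means of the infinitesimal bracket relation $[D,Y'(e_i,x_1)]=\frac{d}{dx_1}Y'(e_i,x_1)$ (the $y$-derivative at $0$ of Lemma \ref{lem:globdbrac}) produces $[Y'(e_i,x_1),Y'(e_j,x_2)]D^{[l]}e_k=\left(D-\frac{d}{dx_1}-\frac{d}{dx_2}\right)^l[Y(e_i,x_1),Y(e_j,x_2)]e_k$, where $Y'$ has reverted to $Y$ because we are now acting inside $E$.

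The crucial observation, and the one on which uniformity over the infinite-dimensional $E'$ depends, is that $D-\frac{d}{dx_1}-\frac{d}{dx_2}$ commutes with multiplication by $(x_1-x_2)$: the operator $D$ acts on $E'$ and ignores the formal variables, while $\frac{d}{dx_1}+\frac{d}{dx_2}$ annihilates $(x_1-x_2)$. Because $E$ satisfies the Jacobi identity it is weakly commutative, so $[Y(e_i,x_1),Y(e_j,x_2)]e_k$ is killed by some $(x_1-x_2)^N$ with $N$ independent of $l$ and $k$; the $n$ derivatives $\frac{d}{dx_1}$ and $m$ derivatives $\frac{d}{dx_2}$ inherited from the first reduction cost at most $n+m$ further powers (by the same induction as in the proof of Corollary \ref{cor:main}). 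Hence $(x_1-x_2)^{N+n+m}$ annihilates $\left(\frac{d}{dx_1}\right)^n\left(\frac{d}{dx_2}\right)^m[Y(e_i,x_1),Y(e_j,x_2)]e_k$, and since $\left(D-\frac{d}{dx_1}-\frac{d}{dx_2}\right)^l$ commutes past this power, the \emph{same} exponent $N+n+m$, independent of $l$, kills the full commutator on every $D^{[l]}e_k$, hence on all of $E'$. This establishes weak commutativity and therefore the Jacobi identity. The main obstacle is precisely this uniformity: a careless estimate would let the needed power grow with $l$, and it is the appearance of the translation operator $\frac{d}{dx_1}+\frac{d}{dx_2}$, rather than a single $\frac{d}{dx_i}$, in the reduction that holds the bound fixed.
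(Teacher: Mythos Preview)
Your argument is correct and takes a genuinely different route from the paper's own proof. The paper verifies the Jacobi identity on $E'$ directly: it writes the three generic elements as $e^{yD}u$, $e^{zD}v$, $e^{tD}w$ with $u,v,w\in E$, uses (\ref{eq:globdder}) and Lemma \ref{lem:globdbrac} together with the formal Taylor theorem to push all the exponentials outside, and then factors the operator $e^{tD}e^{(y-z)\partial_{x_0}+(y-t)\partial_{x_1}+(z-t)\partial_{x_2}}$ from all three terms so that what remains is exactly the Jacobi identity on $E$. Your approach instead invokes the replacement result of \cite{R} (injectivity plus vacuum-free skew-symmetry plus weak commutativity imply the Jacobi identity), reducing the problem to weak commutativity; the key insight, that the operator $D-\partial_{x_1}-\partial_{x_2}$ commutes with $(x_1-x_2)$, is exactly what makes the locality bound uniform over the $D$-direction in $E'$. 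Two small points worth tightening: in the truncation step, $\frac{d}{dx}$ does lower the order of a Laurent series by one, so the phrase ``neither of which lowers the order'' is not literally true, though of course finitely many applications keep you in $E'((x))$; and Lemma \ref{lem:sksy} gives skew-symmetry rather than vacuum-free skew-symmetry, so you should note the one-line deduction $Y'(Y'(u,x_0)v,x_2)=Y'(e^{x_0D}Y'(v,-x_0)u,x_2)=Y'(Y'(v,-x_0)u,x_2+x_0)$ via (\ref{eq:globdder}). The paper's approach is self-contained, while yours is shorter and isolates the essential uniformity issue, at the cost of importing Proposition 4.6 of \cite{R}.
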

\begin{proof}
We first show that $Y'(\cdot,x)\cdot$ satisfies the truncation
property.  We have for $u,v \in E$
\begin{align} 
\label{eqfortrunc}
Y'(e^{yD}u,x)e^{zD}v
=e^{zD}Y(u,x+y-z)v
=e^{zD}e^{(y-z)\frac{\partial}{\partial x}}Y(u,x)v,
\end{align}
where the first equality follows from (\ref{eq:globdder}) and Lemma
\ref{lem:globdbrac} and the second equality follows from the formal
Taylor theorem.  Therefore, if we extract the coefficient for a fixed
power of both $y$ and $z$ then only finitely many terms from the
exponential terms $e^{zD}$ and $e^{(y-z)\frac{\partial}{\partial
x}}$need to be considered.  We have that $Y(u,x)v$ is truncated from
below in powers of $x$.  Thus the coefficient of any fixed powers of
both $y$ and $z$ of (\ref{eqfortrunc}), since we must take at most a
bounded number of derivatives, we find is also truncated from below in
powers of $x$.  The truncation property now follows from linearity.

What remains is to verify the Jacobi identity.  We have that for all $u,v$
and $w \in E$
\begin{align*}
x_{0}^{-1}\delta\left(\frac{x_{1}-x_{2}}{x_{0}}\right)
Y'(e^{yD}u,x_{1})&Y'(e^{zD}v,x_{2})e^{tD}w\\
-
x_{0}^{-1}\delta\left(\frac{-x_{2}+x_{1}}{x_{0}}\right)
&Y'(e^{zD}v,x_{2})Y'(e^{yD}u,x_{1})e^{tD}w\\
&
=x_{1}^{-1}\delta\left(\frac{x_{2}+x_{0}}{x_{1}}\right)
Y'(Y'(e^{yD}u,x_{0})e^{zD}v,x_{2})e^{tD}w
\end{align*}
is equivalent to
\begin{align*}
x_{0}^{-1}\delta\left(\frac{x_{1}-x_{2}}{x_{0}}\right)
Y'(e^{yD}u,x_{1})&e^{tD}Y(v,x_{2}+z-t)w\\
-
x_{0}^{-1}\delta\left(\frac{-x_{2}+x_{1}}{x_{0}}\right)
&Y'(e^{zD}v,x_{2})e^{tD}Y(u,x_{1}+y-t)w\\
&=
x_{1}^{-1}\delta\left(\frac{x_{2}+x_{0}}{x_{1}}\right)
Y'(e^{zD}Y(u,x_{0}+y-z)v,x_{2})e^{tD}w
\end{align*}
which is the same as 
\begin{align*}
x_{0}^{-1}\delta\left(\frac{x_{1}-x_{2}}{x_{0}}\right)
e^{tD}Y(u,x_{1}+y-t)Y(v,x_{2}+z-t)w&\\
-
x_{0}^{-1}\delta\left(\frac{-x_{2}+x_{1}}{x_{0}}\right)
e^{tD}Y(v,x_{2}+z-t)&Y(u,x_{1}+y-t)w\\
=x_{1}^{-1}\delta\left(\frac{x_{2}+x_{0}}{x_{1}}\right)
e^{tD}&Y(Y(u,x_{0}+y-z)v,x_{2}+z-t)w
\end{align*}
which is equivalent to
\begin{align*}
x_{0}^{-1}\delta\left(\frac{x_{1}-x_{2}}{x_{0}}\right)
e^{tD}
e^{(y-t)\frac{\partial}{\partial x_{1}}+(z-t)\frac{\partial}{\partial x_{2}}}
Y(u,x_{1})&Y(v,x_{2})w\\
-
x_{0}^{-1}\delta\left(\frac{-x_{2}+x_{1}}{x_{0}}\right)
e^{tD}
&e^{(y-t)\frac{\partial}{\partial x_{1}}+(z-t)\frac{\partial}{\partial x_{2}}}
Y(v,x_{2})Y(u,x_{1})w\\
=x_{1}^{-1}\delta\left(\frac{x_{2}+x_{0}}{x_{1}}\right)
&e^{tD}
e^{(y-z)\frac{\partial}{\partial x_{0}}+(z-t)\frac{\partial}{\partial x_{2}}}
Y(Y(u,x_{0})v,x_{2})w
\end{align*}
which is the same as 
\begin{align*}
e^{tD}
e^{(y-t)\frac{\partial}{\partial x_{1}}+(z-t)\frac{\partial}{\partial x_{2}}}
x_{0}^{-1}\delta\left(\frac{x_{1}-x_{2}+z-y}{x_{0}}\right)
Y(u,x_{1})&Y(v,x_{2})w\\
-
e^{tD}
e^{(y-t)\frac{\partial}{\partial x_{1}}+(z-t)\frac{\partial}{\partial x_{2}}}
x_{0}^{-1}\delta\left(\frac{-x_{2}+x_{1}+z-y}{x_{0}}\right)
&Y(v,x_{2})Y(u,x_{1})w\\
=
e^{tD}
e^{(y-z)\frac{\partial}{\partial x_{0}}+(z-t)\frac{\partial}{\partial x_{2}}}
x_{1}^{-1}\delta\left(\frac{x_{2}+x_{0}+t-y}{x_{1}}\right)
&Y(Y(u,x_{0})v,x_{2})w,
\end{align*}
which is equivalent to
\begin{align*}
e^{tD}
e^{(y-z)\frac{\partial}{\partial x_{0}}
+(y-t)\frac{\partial}{\partial x_{1}}+(z-t)\frac{\partial}{\partial x_{2}}}
x_{0}^{-1}\delta\left(\frac{x_{1}-x_{2}}{x_{0}}\right)
Y(u,x_{1})&Y(v,x_{2})w\\
-
e^{tD}
e^{(y-z)\frac{\partial}{\partial x_{0}}
+(y-t)\frac{\partial}{\partial x_{1}}+(z-t)\frac{\partial}{\partial x_{2}}}
x_{0}^{-1}\delta\left(\frac{-x_{2}+x_{1}}{x_{0}}\right)
&Y(v,x_{2})Y(u,x_{1})w\\
=
e^{tD}
e^{(y-z)\frac{\partial}{\partial x_{0}}
+(y-t)\frac{\partial}{\partial x_{1}}+(z-t)\frac{\partial}{\partial x_{2}}}
&x_{1}^{-1}\delta\left(\frac{x_{2}+x_{0}}{x_{1}}\right)
Y(Y(u,x_{0})v,x_{2})w,
\end{align*}
which follows from the Jacobi identity.  The first two equalities
follow from (\ref{eq:globdder}) and Lemma \ref{lem:globdbrac}, the
next two by the formal Taylor theorem, the next by
(\ref{twotermdelta}) together with the formal Taylor theorem and the
last because we may factor out the operator $e^{tD}
e^{(y-z)\frac{\partial}{\partial x_{0}} +(y-t)\frac{\partial}{\partial
x_{1}}+(z-t)\frac{\partial}{\partial x_{2}}}$ from all three terms and
use that $E$ is already an ertex algebra.  The result now follows by
linearity.
\end{proof}

We may now give a second proof of Theorem \ref{main}, which does not
rely on any representation theory.  

\begin{proof}
(Second proof of Theorem \ref{main}) The result follows from Lemmas
\ref{lem:E'ert}, \ref{lem:sksy} and \ref{lem:globdbrac},
(\ref{eq:globdder}) and Proposition \ref{prop:Dto1} so that $E$ is
embedded into $E'$ which in turn is embedded into $E''$.
\end{proof}
We give a basis free characterization of the $E''$ in the last proof
in Corollary \ref{U3}.
\section{The canonical properties}
We shall show that certain of our constructions are canonical.  We
shall define and recall the relevant notions of homomorphism needed to
state the properties of interest.

An ertex algebra homomorphism between two ertex algebras $(E_{1},
Y_{1})$ and $(E_{2},Y_{2})$ is a linear map $f:E_{1} \rightarrow
E_{2}$ such that
\begin{align*}
f(Y_{1}(u,x)v)=Y_{2}(f(u),x)f(v), \qquad \text{for all} \quad u,v \in E_{1}.
\end{align*}

This extends to the usual definition of a vertex algebra homomorphism,
which we recall now.  A vertex algebra homomorphism between vertex
algebras $(V_{1}, Y_{1}, \textbf{1}_{1})$ and $(V_{2},Y_{2},
\textbf{1}_{2})$ is a linear map $f:V_{1} \rightarrow V_{2}$ such
that
\begin{align*}
f(Y_{1}(u,x)v)=Y_{2}(f(u),x)f(v),
\end{align*}
and such that
\begin{align*}
f(\textbf{1}_{1})=\textbf{1}_{2},
\end{align*}
or in other words, a vertex algebra homomorphism is a homomorphism of
vertex algebras regarded as ertex algebras which, in addition,
preserves the vacuum vector.

A $D$-ertex algebra homomorphism between two
$D$-ertex algebras $(E_{1}, Y_{1},D_{1})$ and\\
$(E_{2},Y_{2},D_{2})$ is a linear map $f:E_{1} \rightarrow E_{2}$ such
that
\begin{align*}
f(Y_{1}(u,x)v)=Y_{2}(f(u),x)f(v), \qquad \text{for all} \quad u,v \in E_{1}.
\end{align*}
and such that
\begin{align*}
f(D_{1}v)=D_{2}f(v) \qquad \text{for all} \quad v \in E_{1},
\end{align*}
or in other words, a $D$-ertex algebra homomorphism is a
homomorphism of $D$-ertex algebras regarded as ertex
algebras which, in addition, is compatible with the respective
derivative operators.

\begin{prop}
Let $f$ be a vertex algebra homomorphism between vertex algebras\\
$(V_{1}, Y_{1}, \textbf{1}_{1})$ and $(V_{2},Y_{2}, \textbf{1}_{2})$.
Let $D_{1}(v)=v_{-2}\textbf{1}_{1}$ and
$D_{2}(v)=v_{-2}\textbf{1}_{2}$.  Then $f$ is a $D$-ertex
algebra homomorphism between $(V_{1}, Y_{1}, D_{1})$ and $(V_{2},
Y_{2}, D_{2})$.
\end{prop}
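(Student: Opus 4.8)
The plan is to note that being a $D$-ertex algebra homomorphism imposes two requirements on $f$: compatibility with the vertex operator maps, $f(Y_1(u,x)v) = Y_2(f(u),x)f(v)$, and compatibility with the derivative operators, $f(D_1 v) = D_2 f(v)$. The first of these is already part of the hypothesis, since a vertex algebra homomorphism is in particular an ertex algebra homomorphism. Hence the entire content of the proposition is the single identity $f(D_1 v) = D_2 f(v)$ for all $v \in V_1$.

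To establish this I would first reinterpret the definition $D_i v = v_{-2}\textbf{1}_i$ in terms of generating functions. Since $Y_i(v,x) = \sum_{n}v_n x^{-n-1}$, the coefficient $v_{-2}$ is precisely the coefficient of $x^1$, so $D_i v = v_{-2}\textbf{1}_i$ is the coefficient of $x$ in the series $Y_i(v,x)\textbf{1}_i$ (which lies in $V_i[[x]]$ by the creation property). In other words, the operator $D_i$ is recovered from $Y_i$ and $\textbf{1}_i$ simply by extracting one coefficient of the creation series.

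Next I would extend $f$ coefficientwise to a map $V_1[[x,x^{-1}]] \rightarrow V_2[[x,x^{-1}]]$ and apply it to $Y_1(v,x)\textbf{1}_1$. The homomorphism property together with $f(\textbf{1}_1) = \textbf{1}_2$ gives
\[
f\bigl(Y_1(v,x)\textbf{1}_1\bigr) = Y_2\bigl(f(v),x\bigr)f(\textbf{1}_1) = Y_2\bigl(f(v),x\bigr)\textbf{1}_2.
\]
Comparing the coefficients of $x$ on both sides, and using the identification of the previous step, yields $f(v_{-2}\textbf{1}_1) = (f(v))_{-2}\textbf{1}_2$, that is, $f(D_1 v) = D_2 f(v)$, which is exactly what is required.

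I do not anticipate any serious obstacle: morally, the derivative operator of a vertex algebra is canonically determined by the vertex operator map and the vacuum (via a single coefficient of the creation series), so any map preserving both structures automatically intertwines the derivative operators. The only points requiring care are the elementary bookkeeping that $v_{-2}$ is the coefficient of $x^1$ rather than some other power, and the routine fact that the linear map $f$ commutes with extraction of a coefficient of a formal series.
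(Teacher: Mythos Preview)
Your proposal is correct and follows essentially the same approach as the paper: both apply the homomorphism property to $Y_{1}(v,x)\mathbf{1}_{1}$, obtain $f(Y_{1}(v,x)\mathbf{1}_{1})=Y_{2}(f(v),x)\mathbf{1}_{2}$, and then extract the coefficient of $x$. The paper phrases this last step via the strong creation property, writing each side as $e^{xD_{i}}(\cdot)$ before equating linear terms, whereas you read off $v_{-2}\mathbf{1}_{i}$ directly; the content is identical. One small omission: the paper also notes (via Remark~\ref{rem:Dproperties}) that $(V_{i},Y_{i},D_{i})$ are indeed $D$-ertex algebras, which is part of what the conclusion requires.
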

\begin{proof}
It is clear (see Remark \ref{rem:Dproperties}) that $(V_{1}, Y_{1},
D_{1})$ and $(V_{2}, Y_{2}, D_{2})$ are, in fact, $D$-ertex
algebras.  Since $f$ is a vertex algebra homomorphism we have
\begin{align*}
f(Y_{1}(v,x)\textbf{1}_{1})
=Y_{2}(f(v),x)f(\textbf{1}_{1})
=Y_{2}(f(v),x)\textbf{1}_{2},
\end{align*}
which by the strong creation property gives
\begin{align*}
f(e^{xD_{1}}v)=e^{xD_{2}}f(v)
\end{align*}
and equating the coefficients of the linear term yields
\begin{align*}
f(D_{1}v)=D_{2}f(v).
\end{align*}
\end{proof}
Given any vertex algebras when we regard them as $D$-ertex
algebras we always use the standard derivative operator.

The next proposition shows that the vertex algebra $E''$ from
Proposition \ref{prop:Dto1} satisfies a universal property.
\begin{prop}
\label{U1}
Using the notation of the statement and proof of Proposition
\ref{prop:Dto1} let $i$ be the canonical embedding of $E'$ into $E''$.
Given any vertex algebra $(V,Y_{V},{\bf 1}_{V})$ such that there
exists a $D$-ertex algebra homomorphism, $\psi:E' \rightarrow
V$, there exists a unique vertex algebra homomorphism $\phi: E''
\rightarrow V$ such that $\phi (i(u))=\psi(u)$ for all $u \in E'$, or
in other words such that the diagram
\begin{align*}
\xymatrix{
E' \ar[rd]^\psi \ar[r]^i & E'' \ar[d]^\phi\\
&V}
\end{align*}
commutes.

Furthermore, $E''$ is the unique, up to canonical isomorphism, vertex
algebra equipped with an embedding of $E'$ into it which satisfies the
above property.
\end{prop}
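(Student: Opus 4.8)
The plan is to build $\phi$ explicitly, verify it is a vertex algebra homomorphism by a short case analysis, and then deduce the ``up to canonical isomorphism'' clause by the standard formal argument for objects characterized by a universal property. Since $E''=E'\oplus\mathbb{C}\mathbf{1}$ as a vector space, I would let $\phi:E''\to V$ be the unique linear map determined by $\phi(i(u))=\psi(u)$ for $u\in E'$ and $\phi(\mathbf{1})=\mathbf{1}_V$. This assignment is already forced: any vertex algebra homomorphism $\phi$ with $\phi\circ i=\psi$ must agree with $\psi$ on $E'$ and, since vertex algebra homomorphisms preserve the vacuum, must send $\mathbf{1}$ to $\mathbf{1}_V$. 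Hence uniqueness is immediate once existence is checked, and the preservation of the vacuum holds by construction.

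To show this $\phi$ is a vertex algebra homomorphism I would verify $\phi(Y''(a,x)b)=Y_V(\phi(a),x)\phi(b)$ for all $a,b\in E''$; as $Y''$ is bilinear and $\phi$ linear, it suffices to treat the cases in which each of $a,b$ lies in $E'$ or equals $\mathbf{1}$. When $a,b\in E'$ we have $Y''(a,x)b=Y'(a,x)b$ and the identity is exactly the statement that $\psi$ is an ertex algebra homomorphism. When $a=\mathbf{1}$, the left side equals $\phi(b)$ by (\ref{vac2}) and the right side equals $Y_V(\mathbf{1}_V,x)\phi(b)=\phi(b)$ by the vacuum property in $V$. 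The only case carrying genuine content is $a\in E'$, $b=\mathbf{1}$: here (\ref{vac1}) gives $Y''(a,x)\mathbf{1}=e^{xD}a$, so the left side is $\phi(e^{xD}a)=e^{x\mathcal{D}_V}\psi(a)$ because $\psi$ intertwines $D$ with the derivative operator $\mathcal{D}_V$ of $V$, while the right side is $Y_V(\psi(a),x)\mathbf{1}_V=e^{x\mathcal{D}_V}\psi(a)$ by the strong creation property (\ref{strcr}) in $V$. This matching of the formal exponential $e^{xD}$ with strong creation is precisely where the $D$-ertex algebra hypothesis on $\psi$ is used, and it is the one step that requires care rather than bookkeeping.

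For the final clause I would run the usual argument showing objects with a universal property are unique up to unique isomorphism. The first thing to observe is that $i$ is itself a $D$-ertex algebra homomorphism: it is clearly an ertex algebra homomorphism, and by the remark following Proposition \ref{prop:Dto1} the operator $D$ on $E'$ is the restriction of the derivative operator of $E''$. Consequently the universal property may be applied with target $E''$ and $\psi=i$, and since $\mathrm{id}_{E''}$ is a vertex algebra homomorphism with $\mathrm{id}_{E''}\circ i=i$, it is the unique map the property produces in that instance. Now let $(W,j)$ be any vertex algebra equipped with an embedding $j:E'\to W$ (a $D$-ertex algebra homomorphism) satisfying the same property. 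Applying the universal property of $E''$ to $(W,j)$ gives a vertex algebra homomorphism $\phi:E''\to W$ with $\phi\circ i=j$, and applying that of $W$ to $(E'',i)$ gives $\phi':W\to E''$ with $\phi'\circ j=i$. Then $\phi'\circ\phi$ and $\mathrm{id}_{E''}$ are both vertex algebra homomorphisms $E''\to E''$ compatible with $i$, so they coincide by the uniqueness clause, and symmetrically $\phi\circ\phi'=\mathrm{id}_W$. Thus $\phi$ is the desired canonical isomorphism, establishing that $E''$ is unique up to it.
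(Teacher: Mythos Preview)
Your proof is correct and follows essentially the same approach as the paper's: define $\phi$ on $E'\oplus\mathbb{C}\mathbf{1}$ in the forced way, verify the homomorphism identity (you do a clean case split where the paper expands $Y''(i(u)+a\mathbf{1},x)(i(v)+b\mathbf{1})$ all at once, but the content is identical), and then run the standard universal-property uniqueness argument. Your explicit observation that $i$ itself is a $D$-ertex algebra homomorphism---needed to feed $\psi=i$ back into the universal property---is a point the paper leaves implicit, so your version is if anything slightly more careful.
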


\begin{proof}
It is obvious by linearity that if $\phi$ exists then it must be
unique.  Indeed any element of $E''$ may be written as $i(u)+a{\bf 1}$
where $u \in E'$ and $a \in \mathbb{C}$ and we must have
\begin{align*}
\phi(i(u)+a{\bf 1})=\psi(u)+a{\bf 1}_{V}.
\end{align*}
Then $\phi$ obviously preserves the vacuum vector and we need only
show that it is indeed a vertex algebra homomorphism.  We have for all
$u,v \in E'$ and $a,b \in \mathbb{C}$
\begin{align*}
\phi\left(Y''(i(u)+a{\bf 1},x)(i(v)+b{\bf 1})\right)&=
\phi\left(Y''(i(u),x)(i(v))\right)
+\phi\left(Y''(i(u),x)b{\bf 1}\right)\\
&\quad +\phi(ai(v)+ab{\bf 1})\\
&=\phi \circ i\left(Y'(u,x)v\right)
+\phi be^{xD}i(u)
+a\psi(v)+ab{\bf 1}_{V}\\
&=\psi \left(Y'(u,x)v\right)
+\psi be^{xD}u
+a\psi(v)+ab{\bf 1}_{V}\\
&=Y_{V}(\psi(u),x)\psi(v)
+be^{xD_{V}}\psi(u)
+a\psi(v)+ab{\bf 1}_{V}\\
&=Y_{V}(\psi(u),x)\psi(v)
+Y_{V}(\psi(u),x)b{\bf 1}_{V}
+a\psi(v)+ab{\bf 1}_{V}\\
&=Y_{V}(\psi(u)+a{\bf 1}_{V},x)(\psi(v)+b{\bf 1}_{V})\\
&=Y_{V}(\phi(i(u)+a{\bf 1}),x)(\phi(i(v)+b{\bf 1})),\\
\end{align*}
which is what we needed.

In order to establish the uniqueness, consider any other vertex
algebra $F$ with an embedding $j:E' \rightarrow F$ satisfying the same
property as $E''$ and $i$.  Then we get unique vertex algebra
homomorphisms $\phi:E'' \rightarrow F$ and $\xi:F \rightarrow E''$
such that for all $u \in E'$
\begin{align*}
\phi(i(u))&=j(u)\\
\xi(j(u))&=i(u),
\end{align*}
which in turn gives
\begin{align*}
\xi(\phi(i(u)))&=\xi(j(u))=i(u)\\
\text{and}&\\
\phi(\xi(j(u)))&=\phi(i(u))=j(u).
\end{align*}
There is a unique vertex algebra homomorphism $\alpha:E'' \rightarrow
E''$ such that $\alpha(i(u))=i(u)$ for all $u \in E'$ or in other
words such that the diagram
\begin{align*}
\xymatrix{
E' \ar[rd]^i \ar[r]^i & E'' \ar[d]^\alpha\\
&E''}
\end{align*}
commutes.  This unique map $\alpha$ must be the identity map on $E''$,
but also must be $\xi \circ \phi$.  Similarly $\phi \circ \xi$ must be
the identity on $F$.  Therefore $\xi$ and $\phi$ are canonically given
isomorphisms respecting the embeddings.
\end{proof}
\begin{lemma}
Using the notation of Section \ref{sec:main}, we have
$D^{[n]}e_{i}=D^{n}D^{[0]}e_{i}$ whenever $D^{[n]}e_{i} \in M$.
\end{lemma}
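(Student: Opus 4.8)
The plan is to induct on $n$, using the earlier lemma that $\bar{Y}(Du,x)=\frac{d}{dx}\bar{Y}(u,x)$ when acting on $E$ (which holds for every $u \in E'$) to match the vertex operator of the iterate $D^{n}D^{[0]}e_{i}$ with that attached to the basis name $D^{[n]}e_{i}$, and then to conclude equality from the injectivity recorded in Lemma \ref{lem:Y'inj}.

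First I would observe that both objects live in $E'$: the element $D^{[0]}e_{i}$ lies in $M \subseteq E'$, and since $D$ is a linear operator on $E'$, the iterate $D^{n}D^{[0]}e_{i}$ again lies in $E'$; on the other hand, whenever $D^{[n]}e_{i} \in M$ it is by definition one of the basis vectors of $E'$. Hence the difference $D^{n}D^{[0]}e_{i}-D^{[n]}e_{i}$ is a well-defined element of $E'$, and it suffices to show that its vertex operator vanishes.

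Next I would compute $\bar{Y}(D^{n}D^{[0]}e_{i},x)$ acting on $E$ by induction on $n$. Since $\bar{Y}(D^{[0]}e_{i},x)e_{j}=Y(e_{i},x)e_{j}$, and since each partial iterate $D^{k}D^{[0]}e_{i}$ lies in $E'$, the lemma $\bar{Y}(Du,x)=\frac{d}{dx}\bar{Y}(u,x)$ applies at every step and yields
\begin{align*}
\bar{Y}(D^{n}D^{[0]}e_{i},x)e_{j}=\left(\frac{d}{dx}\right)^{n}Y(e_{i},x)e_{j}.
\end{align*}
The right-hand side is precisely $\bar{Y}(D^{[n]}e_{i},x)e_{j}$ by the defining formula for $\bar{Y}$, so $\bar{Y}(D^{n}D^{[0]}e_{i}-D^{[n]}e_{i},x)=0$ when acting on $E$.

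Finally, recalling from the proof of Lemma \ref{lem:Y'inj} that $Y'$ and $\bar{Y}$ coincide on $E'$ when acting on $E$, the vanishing above says $Y'(D^{n}D^{[0]}e_{i}-D^{[n]}e_{i},x)=0$ on $E$, and Lemma \ref{lem:Y'inj} (equivalently, the restrictive condition defining $A$) then forces $D^{n}D^{[0]}e_{i}-D^{[n]}e_{i}=0$. The only point needing care, and the sole place where the hypothesis $D^{[n]}e_{i}\in M$ enters, is the membership of $D^{[n]}e_{i}$ in $E'$. I expect the one genuinely clarifying observation to be that the argument never requires the intermediate names $D^{[k]}e_{i}$ with $k<n$ to belong to $M$: the vertex-operator computation is insensitive to whether $D$ produces a genuine basis name $D^{[k+1]}e_{i}$ or its surrogate $\bar{D}^{[k+1]}e_{i}$, since the two agree under $\bar{Y}$ on $E$ by construction.
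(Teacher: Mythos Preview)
Your proof is correct and follows essentially the same approach as the paper: both compute the vertex operator of $D^{n}D^{[0]}e_{i}$ acting on $E$ to be $\left(\frac{d}{dx}\right)^{n}Y(e_{i},x)$, match it with that of $D^{[n]}e_{i}$, and then invoke injectivity (Lemma~\ref{lem:Y'inj}). The only cosmetic difference is that the paper works with $Y'$ and cites Lemma~\ref{lem:dder} (together with Lemma~\ref{lem:EtoE'}), whereas you work directly with $\bar{Y}$ on $E$ and appeal to the restrictive condition defining $A$; since $Y'$ and $\bar{Y}$ coincide on $E'\otimes E$, these are the same argument.
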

\begin{proof}
When acting on $E$ we have
\begin{align*}
Y'(D^{n}D^{[0]}e_{i},x)
=\left(\frac{d}{dx}\right)^{n}Y'(D^{[0]}e_{i},x)
=\left(\frac{d}{dx}\right)^{n}Y(e_{i},x)
=Y'(D^{[n]}e_{i},x),
\end{align*}
where the first equality follows from Lemma \ref{lem:dder} and the
second and third equalities follow from (\ref{eq:Y'}) together with
linearity.  Thus the result follows by Lemma \ref{lem:EtoE'} and Lemma
\ref{lem:Y'inj}.
\end{proof}
Let $(V',Y',D')$ be a $D$-ertex algebra.  Let $(V,Y)$ be a
sub-ertex algebra of $V'$ regarded as merely an ertex algebra.  We
call $\langle V,D' \rangle $ the smallest $D$-ertex algebra
contained in $V'$ and containing $V$ such that the
derivative operator is a restriction of $D'$. Or in
other words, $\langle V,D' \rangle$ is the smallest $D$-ertex
subalgebra of $V'$ which contains $V$.  Linearity and
(\ref{eqfortrunc}) immediately show that $\langle V,D' \rangle$ is the
linear span of vectors of the form $D'^{n}v$ where $n \geq 0$ and $v
\in V$.

Let $(E,Y)$ be an ertex algebra and $(E',Y',D')$ a $D$-ertex algebra.
Let $f:E \rightarrow E'$ be an ertex algebra homomorphism.  We say
that $f$ is {\it $D$-injective} if $\langle f(E),D' \rangle$
is an injective ertex algebra.  
\begin{theorem}
\label{U2}
Using the notation of Section \ref{sec:main} let $i$ be the obvious
embedding of $E$ into $E'$.  Given any $D$-ertex algebra
$(V,Y_{V},D_{V})$ such that there exists a {\it
$D$-injective} ertex algebra homomorphism, $\psi:E
\rightarrow V$, then there exists a unique $D$-ertex algebra
homomorphism $\phi: E' \rightarrow V$ such that $\phi (i(u))=\psi(u)$
for all $u \in E$, or in other words such that the diagram
\begin{align*}
\xymatrix{
E \ar[rd]^\psi \ar[r]^i & E' \ar[d]^\phi\\
&V}
\end{align*}
commutes.

Furthermore, $E'$ is the unique, up to canonical isomorphism, $D$-ertex
algebra which has a $D$-injective 
embedding of $E$
into it and which satisfies the above property.
\end{theorem}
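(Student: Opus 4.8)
The plan is to prove the universal mapping property first (existence and uniqueness of $\phi$) and then deduce the uniqueness of $E'$ up to canonical isomorphism by the same formal argument used in Proposition \ref{U1}. Two preliminary observations organize everything. First, by the lemma just established, $D^{[n]}e_{i}=D^{n}D^{[0]}e_{i}$ for every $D^{[n]}e_{i}\in M$, so $E'$ is spanned by $\{D^{n}i(e_{i}):n\geq 0,\ i\in I\}$; equivalently $E'=\langle i(E),D\rangle$. Second, in \emph{any} $D$-ertex algebra, differentiating \eqref{D-Bracketexp1} and \eqref{D-Bracketexp2} at $z=0$ yields the $\mathcal{D}$-derivative formula $Y(Du,x)=\frac{d}{dx}Y(u,x)$ and the $\mathcal{D}$-bracket formula $Y(u,x)Dv=(D-\frac{d}{dx})Y(u,x)v$, which I will use freely in $V$. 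Uniqueness of $\phi$ is then immediate: compatibility with $D$ and $\phi i=\psi$ force $\phi(D^{n}i(e_{i}))=D_{V}^{n}\psi(e_{i})$ on a spanning set. So the real work is existence.

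To construct $\phi$ I would define $\Psi:\bar{S}\to V$ by $\Psi(D^{[n]}e_{i})=D_{V}^{n}\psi(e_{i})$ and show it descends through the surjection $q:\bar{S}\to E'$ that sends each generator $D^{[n]}e_{i}$ to itself if it lies in $M$ and to $\bar{D}^{[n]}e_{i}$ otherwise. Using Lemma \ref{lem:Y'inj} one checks that $\ker q=K:=\ker(\bar{Y}\text{ acting on }E)$, so it suffices to prove $K\subseteq\ker\Psi$. This is the \emph{main obstacle}, and it is precisely where $D$-injectivity enters. Given $m=\sum c_{n,i}D^{[n]}e_{i}$ with $\bar{Y}(m,x)=0$ on $E$, set $w=\Psi(m)=\sum c_{n,i}D_{V}^{n}\psi(e_{i})$; I must show $w=0$. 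The point is that $w$ lies in $W:=\langle\psi(E),D_{V}\rangle$, which is an injective ertex algebra by the $D$-injectivity hypothesis, so it is enough to show $Y_{V}(w,x)=0$ on $W$, and since $W$ is spanned by the $D_{V}^{k}\psi(e_{j})$ it is enough to show $Y_{V}(w,x)D_{V}^{k}\psi(e_{j})=0$ for all $k,j$.

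The case $k=0$ is the heart of the matter: using the $\mathcal{D}$-derivative formula to replace each $Y_{V}(D_{V}^{n}\psi(e_{i}),x)$ by $(\frac{d}{dx})^{n}Y_{V}(\psi(e_{i}),x)$ and then the fact that $\psi$ is an ertex algebra homomorphism on $E$, one gets $Y_{V}(w,x)\psi(e_{j})=\psi\bigl(\sum c_{n,i}(\frac{d}{dx})^{n}Y(e_{i},x)e_{j}\bigr)=\psi(\bar{Y}(m,x)e_{j})=0$. The general $k$ then follows by applying the $\mathcal{D}$-bracket formula, since $Y_{V}(w,x)D_{V}^{k}\psi(e_{j})=(D_{V}-\frac{d}{dx})^{k}Y_{V}(w,x)\psi(e_{j})=0$. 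Injectivity of $W$ gives $w=0$, so $\Psi$ descends to a well-defined linear map $\phi:E'\to V$ with $\phi i=\psi$ and $\phi D=D_{V}\phi$ (the last holding on the spanning set, hence everywhere). It remains to verify multiplicativity, which I would check on basis vectors $D^{[n]}e_{i},D^{[m]}e_{j}\in M$ directly from \eqref{eq:Y'}: applying $\phi$ to $(\frac{d}{dx})^{n}(D-\frac{d}{dx})^{m}Y(e_{i},x)e_{j}$, pulling $\phi$ through the operators via $\phi D=D_{V}\phi$, and using $\phi|_{E}=\psi$ together with the homomorphism property of $\psi$, produces $(\frac{d}{dx})^{n}(D_{V}-\frac{d}{dx})^{m}Y_{V}(\psi(e_{i}),x)\psi(e_{j})$, which by the two $\mathcal{D}$-formulas in $V$ is exactly $Y_{V}(D_{V}^{n}\psi(e_{i}),x)D_{V}^{m}\psi(e_{j})=Y_{V}(\phi(D^{[n]}e_{i}),x)\phi(D^{[m]}e_{j})$; bilinearity then gives the general case.

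Finally, for the uniqueness of $E'$ up to canonical isomorphism I would first note that $i$ is itself $D$-injective, since $\langle i(E),D\rangle=E'$ is an injective ertex algebra by Lemma \ref{lem:Y'inj}. Then for any rival $D$-ertex algebra $F$ carrying a $D$-injective embedding $j:E\to F$ with the same universal property, the property of $E'$ applied to $(F,j)$ and that of $F$ applied to $(E',i)$ produce $D$-ertex algebra homomorphisms $\phi:E'\to F$ and $\xi:F\to E'$ respecting the embeddings; the uniqueness clauses force $\xi\phi=\mathrm{id}_{E'}$ and $\phi\xi=\mathrm{id}_{F}$, exactly as in the proof of Proposition \ref{U1}. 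This yields the canonical isomorphism and completes the proof.
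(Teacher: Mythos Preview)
Your proof is correct and uses the same ingredients as the paper's; the differences are organizational rather than substantive. The paper defines $\phi$ directly on the basis $M$ of $E'$ (which is automatically well-defined) and then verifies $\phi D=D_{V}\phi$ as a separate step, the nontrivial case being $D^{[n+1]}e_{l}\notin M$, where the $D$-injectivity hypothesis is invoked. You instead define $\Psi$ on all of $\bar{S}$ and show it descends through $q$ by proving $K\subseteq\ker\Psi$; this packages well-definedness and $D$-compatibility into one step, with the same $D$-injectivity computation (vanishing of $Y_{V}(w,x)$ on $W$) doing the work. For multiplicativity, the paper uses the exponential form $e^{yD}i(e_{k})$, $e^{zD}i(e_{l})$ together with \eqref{eq:globdder} and Lemma~\ref{lem:globdbrac}, whereas you work directly with the infinitesimal formula \eqref{eq:Y'}; both routes are straightforward. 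The uniqueness-up-to-isomorphism argument is identical in both.
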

\begin{proof}
If $\phi$ exists, then we must have 
\begin{align*}
\phi (D^{[n]}e_{l})
=\phi(D^{n}D^{[0]}e_{l})
=D_{V}^{n}\phi(D^{[0]}e_{l})
=D_{V}^{n}\phi(i(e_{l}))
=D_{V}^{n}\psi(e_{l}),
\end{align*}
for all $D^{[n]}e_{l} \in M$.  Thus $\phi$, if it exists must be
unique.  And furthermore we may use the above to define $\phi$ as the
unique linear map satisfying
\begin{align*}
\phi (D^{[n]}e_{l})
=D_{V}^{n}\psi(e_{l}),
\end{align*}
and such that $\phi (i(u))=\psi(u)$
for all $u \in E$.

We shall next show that $\phi$ is compatible with the
derivative operators.  Consider $D^{[n]}e_{l} \in M$.
If $D^{[n+1]}e_{l} \in M$ then
\begin{align*}
\phi DD^{[n]}e_{l}=\phi D^{[n+1]}e_{l}=D_{V}^{n+1}\psi(e_{l})=D
_{V} \phi D^{[n]}e_{l}.
\end{align*}
And in the case that $D^{[n+1]}e_{l} \notin M$.  Then we may write
\begin{align*}
DD^{[n]}e_{l}=\bar{D}^{[n+1]}e_{l}=\sum_{k \geq 0,l \in
I}a_{k,l}D^{[k]}e_{l},
\end{align*}
where each summand is scalar multiple of an
element of $M$.  Then for $j \in I$ we have
\begin{align*}
Y_{V}(\phi DD^{[n]}e_{l}-&D_{V}\phi D^{[n]}e_{l},x)\psi(e_{j})\\
&=Y_{V}\left(
\phi\left(\sum_{k \geq 0,l \in
I}a_{k,l}D^{[k]}e_{l}\right)
-D_{V}^{n+1}\psi(e_{l}),x\right)\psi(e_{j})\\
&=Y_{V}\left(
\sum_{k \geq 0,l \in I}a_{k,l}
D_{V}^{k}\psi(e_{l})
-D_{V}^{n+1}\psi(e_{l}),x\right)\psi(e_{j})\\
&=\sum_{k \geq 0,l \in I}a_{k,l}
\left(\frac{d}{dx}\right)^{k}
Y_{V}(\psi(e_{l}),x)\psi(e_{j})
-\left(\frac{d}{dx}\right)^{n+1}
Y_{V}(\psi(e_{l}),x)\psi(e_{j})\\
&=\sum_{k \geq 0,l \in I}a_{k,l}
\left(\frac{d}{dx}\right)^{k}
\psi(Y(e_{l},x)e_{j})
-\left(\frac{d}{dx}\right)^{n+1}
\psi(Y(e_{l},x)e_{j})\\
&=\sum_{k \geq 0,l \in I}a_{k,l}
\left(\frac{d}{dx}\right)^{k}
\phi(i(Y(e_{l},x)e_{j}))
-\left(\frac{d}{dx}\right)^{n+1}
\phi(i(Y(e_{l},x)e_{j}))\\
&=\phi
\left(
\sum_{k \geq 0,l \in I}a_{k,l}
\left(\frac{d}{dx}\right)^{k}
Y'(i(e_{l}),x)i(e_{j})
-\left(\frac{d}{dx}\right)^{n+1}
Y'(i(e_{l}),x)i(e_{j})\right)\\
&=\phi
\left(
Y'(\sum_{k \geq 0,l \in I}a_{k,l}
D^{k}
i(e_{l}),x)i(e_{j})
-
Y'(D^{n+1}i(e_{l}),x)i(e_{j})
\right)\\
&=\phi
\left(
Y'(\sum_{k \geq 0,l \in I}a_{k,l}
D^{[k]}
e_{l},x)i(e_{j})
-
Y'(D^{[n+1]}e_{l},x)i(e_{j})
\right)\\
&=\phi
\left(
Y'(\bar{D}^{[n+1]}e_{l},x)i(e_{j})
-
Y'(D^{[n+1]}e_{l},x)i(e_{j})
\right)\\
&=\psi\left(
\bar{Y}(\bar{D}^{[n+1]}e_{l}-D^{[n+1]}e_{l},x)e_{j}\right)\\
&=0.
\end{align*}
Since $E'$ has the $\mathcal{D}$-bracket derivative property and
because $\psi$ is $D$-injective, by linearity this gives
that
\begin{align*}
\phi DD^{[n]}e_{l}=D_{V}\phi D^{[n]}e_{l}.
\end{align*}
Thus by linearity we have that
\begin{align*}
\phi D u =D_{V} \phi u
\end{align*}
for all $u \in E'$.

We next show that $\phi$ is an ertex algebra homomorphism.  For all
$l,k \in I$ we have
\begin{align*}
\phi (Y'(e^{yD}i(e_{k}),x)e^{zD}i(e_{l}))
&=\phi e^{zD}Y'(i(e_{k}),x+y-z)i(e_{l})\\
&=e^{zD_{V}}\phi Y'(i(e_{k}),x+y-z)i(e_{l})\\
&=e^{zD_{V}}\phi (i(Y(e_{k},x+y-z)e_{l}))\\
&=e^{zD_{V}}\psi Y(e_{k},x+y-z)e_{l}\\
&=e^{zD_{V}} Y_{V}(\psi(e_{k}),x+y-z)\psi(e_{l})\\
&=Y_{V}(e^{yD_{V}}\psi(e_{k}),x)e^{zD_{V}}\psi(e_{l})\\
&=Y_{V}(e^{yD_{V}}\phi (i(e_{k})),x)e^{zD_{V}}\phi(i(e_{l}))\\
&=Y_{V}(\phi \left(e^{yD}i(e_{k})\right),x)
\phi\left(e^{zD}i(e_{l}) \right),
\end{align*}
so that the result, except for the uniqueness of $E'$, follows by linearity.

To establish the uniqueness of $E'$, we first note that since $\langle
i(E),D \rangle = E'$ and by Lemma \ref{lem:Y'inj} we have that $i$ is
a $D$-injective 
ertex algebra homomorphism.  Let us
say that $(F,Y_{F},D_{F})$ is another such $D$-ertex algebra, with
$D$-injective 
ertex algebra embedding of $E$ into
$F$ given by $j$.  We get a $D$-ertex algebra homomorphism $\phi:E'
\rightarrow F$ and a second $D$-ertex algebra homomorphism $\xi:F
\rightarrow E'$.  Moreover, for all $v \in E$ we have
\begin{align*}
\xi(j(v))=i(v)\\
\phi(i(v))=j(v),
\end{align*}
which in turn gives
\begin{align*}
\xi(\phi(i(v)))&=\xi(j(v))=i(v)\\
\text{and}&\\
\phi(\xi(j(v)))&=\phi(i(v))=j(v).
\end{align*}
There is a unique $D$-ertex algebra homomorphism $\alpha:E' \rightarrow
E'$ such that $\alpha(i(v))=i(v)$ for all $v \in E$ or in other
words such that the diagram
\begin{align*}
\xymatrix{
E \ar[rd]^i \ar[r]^i & E' \ar[d]^\alpha\\
&E'}
\end{align*}
commutes.  This unique map $\alpha$ must be the identity map on $E'$,
but also must be $\xi \circ \phi$.  Similarly $\phi \circ \xi$ must be
the identity on $F$.  Therefore $\xi$ and $\phi$ are canonically given
isomorphisms respecting the embeddings.
\end{proof}


\begin{corollary}
\label{U3}
Using the notation of the second proof of Theorem \ref{main} at the
close of Section \ref{sec:main}, let $i$ be the obvious embedding of
$E$ into $E''$.  Given any vertex algebra $(V,Y_{V},{\bf 1}_{V})$ such
that there exists a {\it $D$-injective} ertex algebra homomorphism,
$\psi:E \rightarrow V$, then there exists a unique vertex algebra
homomorphism $\phi: E'' \rightarrow V$ such that $\phi (i(u))=\psi(u)$
for all $u \in E$, or in other words such that the diagram
\begin{align*}
\xymatrix{
E \ar[rd]^\psi \ar[r]^i & E'' \ar[d]^\phi\\
&V}
\end{align*}
commutes.

Furthermore $E''$ is the unique, up to canonical isomorphism, vertex
algebra with a $D$-injective 
embedding of $E$
into it which satisfies the above property.
\end{corollary}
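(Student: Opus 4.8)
The plan is to obtain Corollary \ref{U3} simply by composing the two universal properties already established, namely Theorem \ref{U2} (for the embedding $E \hookrightarrow E'$) and Proposition \ref{U1} (for the embedding $E' \hookrightarrow E''$). Write $i'\colon E \to E'$ and $i''\colon E' \to E''$ for these two embeddings, so that the embedding $i$ of the statement is $i = i'' \circ i'$.

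First I would address existence. Given the vertex algebra $(V, Y_{V}, {\bf 1}_{V})$, regard it as a $D$-ertex algebra using its standard derivative operator $D_{V}$; this is legitimate by Remark \ref{rem:Dproperties} and the proposition showing that every vertex algebra is a $D$-ertex algebra. The hypothesis that $\psi\colon E \to V$ is $D$-injective is then exactly what is needed to invoke Theorem \ref{U2}: it yields a unique $D$-ertex algebra homomorphism $\phi'\colon E' \to V$ with $\phi' \circ i' = \psi$. Since $\phi'$ is a $D$-ertex algebra homomorphism from $E'$ into the vertex algebra $V$, Proposition \ref{U1} then produces a unique vertex algebra homomorphism $\phi\colon E'' \to V$ with $\phi \circ i'' = \phi'$. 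Composing, $\phi \circ i = \phi \circ i'' \circ i' = \phi' \circ i' = \psi$, which gives the desired factorization.

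For uniqueness of $\phi$, the key observation is that $E'$ sits inside $E''$ as a $D$-ertex subalgebra: by the remark following Proposition \ref{prop:Dto1} the operator $D$ on $E'$ is the restriction of the derivative operator of $E''$. Hence any vertex algebra homomorphism $\phi\colon E'' \to V$ with $\phi \circ i = \psi$ restricts along $i''$ to a map $\phi \circ i''\colon E' \to V$ which, by the proposition showing that every vertex algebra homomorphism is automatically a $D$-ertex algebra homomorphism, is itself a $D$-ertex algebra homomorphism and satisfies $(\phi \circ i'') \circ i' = \psi$. By the uniqueness clause of Theorem \ref{U2} we must have $\phi \circ i'' = \phi'$, and then the uniqueness clause of Proposition \ref{U1} forces $\phi$ to be the map constructed above.

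Finally, the uniqueness of $E''$ up to canonical isomorphism is the standard universal-property argument already carried out twice, in the proofs of Proposition \ref{U1} and Theorem \ref{U2}: given any other vertex algebra $F$ with a $D$-injective embedding $j\colon E \to F$ enjoying the same property, the property produces vertex algebra homomorphisms $\phi\colon E'' \to F$ and $\xi\colon F \to E''$ respecting the embeddings, and each composite is forced to be the identity by the uniqueness of the endomorphism of $E''$ (respectively $F$) that fixes the image of $E$. I expect the only genuinely delicate point to be the bookkeeping in the uniqueness step, namely confirming that restricting a vertex algebra homomorphism out of $E''$ to $E'$ really lands in the class of $D$-ertex algebra homomorphisms to which Theorem \ref{U2} applies; everything else is a formal composition of the two established universal properties.
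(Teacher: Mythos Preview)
Your proposal is correct and follows essentially the same approach as the paper: existence is obtained by composing Theorem \ref{U2} with Proposition \ref{U1}, and uniqueness of $E''$ is the standard universal-property argument. The only minor differences are that the paper argues uniqueness of $\phi$ slightly more directly (observing $\phi$ is forced on the basis elements $D^{[n]}e_{l}$ and on ${\bf 1}$) rather than chaining the two uniqueness clauses as you do, and the paper explicitly notes that the embedding $i\colon E \to E''$ is itself $D$-injective (citing the proof of Theorem \ref{U2}), which is needed to run the uniqueness-of-$E''$ argument; you should make that point explicit.
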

\begin{proof}
Since every vertex algebra homomorphism is also a $D$-ertex
algebra homomorphism we may argue similarly as in Theorem \ref{U2} to
get that $\phi$ must be uniquely determined on all elements of the
form $D^{[n]}e_{i} \oplus 0$.  Further, since the vacuum must be
preserved, by linearity we have that $\phi$, if it exists, must be
unique.

By Theorem \ref{U2} we get a $D$-ertex algebra homomorphism
$\xi:E' \rightarrow V$ and by Proposition \ref{U1} this in turn gives us
the desired vertex algebra homomorphism.  We next check that
indeed $\phi (i(u))=\psi(u)$.  Let $j$ be the obvious embedding of $E$
into $E'$ and let $k$ be the obvious embedding of $E'$ into $E''$.
Then we have $\phi(k(v))=\xi(v)$ for all $v \in E'$ and
$\xi(j(u)=\psi(u)$ for all $u \in E$.  Therefore
\begin{align*}
\phi(i(u))=\phi(k(j(u)))=\xi(j(u))=\psi(u).
\end{align*}

In order to establish the uniqueness of $E''$ we first note that $i$
is $D$-injective as we noted in the proof of Theorem \ref{U2}.  
Let us say that $F$
with a $D$-injective
embedding $j:E \rightarrow F$ is
another vertex algebra with the same property.  Then we get two vertex
algebra homomorphisms $\phi:E'' \rightarrow F$ and $\xi: F \rightarrow
E''$ such that $\phi(i(v))=j(v)$ and $\xi(j(v))=i(v)$ for all $v \in E$.  Thus
\begin{align*}
\phi(\xi(j(v)))=\phi(i(v))=j(v)
\end{align*}
and
\begin{align*}
\xi(\phi(i(v)))=\xi(j(v))=i(v).
\end{align*}
There is a unique vertex algebra homomorphism $\alpha:E'' \rightarrow
E''$ such that $\alpha(i(v))=i(v)$ for all $v \in E$ or in other
words such that the diagram
\begin{align*}
\xymatrix{
E \ar[rd]^i \ar[r]^i & E'' \ar[d]^\alpha\\
&E''}
\end{align*}
commutes.  This unique map $\alpha$ must be the identity map on $E''$,
but also must be $\xi \circ \phi$.  Similarly $\phi \circ \xi$ must be
the identity on $F$.  Therefore $\xi$ and $\phi$ are canonically given
isomorphisms respecting the embeddings.
\end{proof}

We recall the notation from Theorem \ref{main} and let $i$ be the
obvious embedding of $E$ into $E''_{r}$.  The following theorem states
that $E''_{r}$ satisfies a universal property.
\begin{theorem}
Using the notation of Theorem \ref{main} let $i$ be the obvious
embedding of $E$ into $E''_{r}$.  Given any vertex algebra $V$ such
that there exists an injective ertex algebra homomorphism $\psi:E
\rightarrow V$, then there exists a unique injective vertex algebra
homomorphism $\phi:E''_{r} \rightarrow V$ such that $\phi
(i(u))=\psi(u)$ for all $u \in E$, or in other words such that the
diagram 
\begin{align*}
\xymatrix{
E \ar[rd]^\psi \ar[r]^i & E''_{r} \ar[d]^\phi\\
&V}
\end{align*}
commutes.

Furthermore, $E''_{r}$ is the unique, up to canonical isomorphism, vertex
algebra equipped with an embedding of $E$ into it and which satisfies the
above property.
\end{theorem}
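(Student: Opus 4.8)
The plan is to reduce the statement to the universal property already established for the second construction and to read off injectivity of $\phi$ from faithfulness of the adjoint action. First, uniqueness of $\phi$ is immediate: by the construction in Theorem \ref{main} and Corollary \ref{cor:main}, $E''_{r}$ is the vertex algebra generated inside $(\mathcal{E}(E),Y_{\mathcal{E}},1_{E})$ by the adjoint image $i(E)$, so any vertex algebra homomorphism out of $E''_{r}$ is determined by its restriction to $i(E)$, which the condition $\phi\circ i=\psi$ fixes.

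For existence I would first identify the ``$D$-part'' of $E''_{r}$ with $E'$. Writing $D_{r}$ for the derivative operator of $E''_{r}$, the argument of Corollary \ref{cor:main} shows that the $D$-ertex subalgebra $\langle i(E),D_{r}\rangle$ is precisely the linear span of the higher derivatives $(d/dx)^{n}Y(e_{i},x)$, and Lemma \ref{lem:Y'inj} shows that $D^{[n]}e_{i}\mapsto (d/dx)^{n}Y(e_{i},x)$ is a $D$-ertex algebra isomorphism $E'\to\langle i(E),D_{r}\rangle$. Since any injective ertex algebra homomorphism into a vertex algebra is automatically $D$-injective (the vertex operator map of $V$ is injective on every subspace), Theorem \ref{U2} yields a unique $D$-ertex algebra homomorphism $E'\to V$ extending $\psi$; transporting it across the isomorphism produces a $D$-ertex algebra homomorphism $\chi\colon\langle i(E),D_{r}\rangle\to V$. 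Because $E''_{r}$ is $\langle i(E),D_{r}\rangle$ with its vacuum $1_{E}$ adjoined, I would then extend $\chi$ to $\phi\colon E''_{r}\to V$ by sending $1_{E}\mapsto\mathbf{1}_{V}$, checking that the result respects the vertex operator map using the defining relations (\ref{vac1})--(\ref{vac2}) exactly as in Proposition \ref{prop:Dto1} and Corollary \ref{U3}.

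To prove $\phi$ is injective I would establish, by induction on the generation of $E''_{r}$ by $i(E)$, the intertwining identity $\psi(a\cdot e)=Y_{V}(\phi(a),x)\psi(e)$ for all $a\in E''_{r}$ and $e\in E$: it holds for $a=i(u)$ because $\psi$ is an ertex algebra homomorphism and $\phi\circ i=\psi$, it holds for $a=1_{E}$ because $\phi$ preserves the vacuum, and it passes through each $n$-th product since both sides obey the same iterate formula while $\phi$ is a vertex algebra homomorphism. Granting this, $\phi(a)=0$ forces $Y_{V}(\phi(a),x)=0$, hence $\psi(a\cdot e)=0$ for all $e$, hence $a\cdot e=0$ for all $e$ by injectivity of $\psi$, hence $a=0$ by faithfulness of the adjoint action. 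The canonical uniqueness of $E''_{r}$ then follows by the now-standard two-morphism argument of Proposition \ref{U1}, Theorem \ref{U2} and Corollary \ref{U3}: any vertex algebra $F$ carrying an embedding of $E$ with the same property admits mutually inverse vertex algebra homomorphisms to and from $E''_{r}$ over $E$, and the uniqueness clause forces the two composites to be the respective identities.

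The step I expect to be delicate is the adjunction of the vacuum in the existence argument: one must verify that $1_{E}=\mathrm{id}_{E}$ is not already among the higher-derivative fields, so that the vacuum is genuinely new and the assignment $\phi(1_{E})=\mathbf{1}_{V}$ is consistent with $\chi$. This is exactly the point at which one uses that $E$ carries no vacuum of its own and where the injectivity hypothesis on $\psi$ must be brought to bear; by contrast, once the intertwining identity is in hand, the injectivity of $\phi$ is robust and essentially formal.
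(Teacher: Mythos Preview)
Your approach differs substantially from the paper's, and there is a genuine gap in the existence argument.

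The paper's proof is purely representation-theoretic and never touches the $E'$/$E''$ machinery of Section~\ref{sec:main}. It takes the adjoint representation $\xi$ of $V$, lets $V_r\subset\mathcal{E}(V)$ be the vertex algebra generated by $\xi(\psi(E))$, asserts that $V_r\cong E''_r$ via the unique vertex algebra isomorphism $j$ satisfying $j(i(v))=\xi(\psi(v))$, and then sets $\phi=\xi^{-1}\circ j$. Injectivity of $\phi$ is immediate because $j$ is an isomorphism and $\xi$ is faithful; there is no need for your intertwining identity or an induction on generation. The uniqueness of $E''_r$ is then handled exactly as you describe.

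Your route---identifying $\langle i(E),D_r\rangle\cong E'$ and then invoking Theorem~\ref{U2} and the vacuum-adjunction of Proposition~\ref{prop:Dto1}---requires the decomposition $E''_r=E'_r\oplus\mathbb{C}1_E$. You correctly flag this as the delicate step, but your proposed resolution (``one uses that $E$ carries no vacuum of its own'') is not available: the hypotheses allow $E$ to be any injective ertex algebra, and in particular $E$ may already be a vertex algebra. In that case $i(\mathbf{1}_E)=Y(\mathbf{1}_E,x)=1_E$, so $1_E\in i(E)\subset E'_r$ and the direct sum fails; indeed $E''_r=E'_r$. Your map $\chi$ would then send $1_E=i(\mathbf{1}_E)$ to $\psi(\mathbf{1}_E)$, and you would have to argue that $\psi(\mathbf{1}_E)=\mathbf{1}_V$---which is not part of what you have assumed about $\psi$ and is not obviously forced. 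More generally, even without a vacuum in $E$ it is not clear from your argument why $1_E$ could not appear as a linear combination of higher-derivative fields. The paper's construction sidesteps this entirely: because $j$ is built as a vertex algebra isomorphism between two vertex algebras generated by corresponding local families, it preserves the vacuum automatically, and $\xi^{-1}$ carries $1_V\in\mathcal{E}(V)$ back to $\mathbf{1}_V$.

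In short: your uniqueness argument and the final canonical-uniqueness paragraph match the paper, but your existence argument detours through an identification $E''_r\cong E''$ that need not hold, while the paper obtains $\phi$ directly from the adjoint representation of $V$.
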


\begin{proof}
If $\phi$ exists it must be unique because it is determined on a
generating set of $E''_{r}$.  Let $V$ be a vertex algebra with $\psi:E
\rightarrow V$ an embedding.  Then consider the adjoint representation
of $V$, which we call $\xi$.  Consider the vertex algebra $V_{r}$ of
weak vertex operators generated by $\xi(\psi(E))$.  It is clear that
$V_{r}$ is isomorphic to $E''_{r}$ under the unique vertex algebra
isomorphism $j:E''_{r} \rightarrow V_{r}$ satisfying
\begin{align*}
j(i(v))=\xi(\psi(v)),
\end{align*}
for all $v \in E$.  Further, since $V$ is a vertex algebra, $\xi$ is
faithful.  Let
\begin{align*}
\phi(v)=\xi^{-1}(j(v)) ,
\end{align*}
for all $v \in E''_{r}$.  Then $\phi$ is clearly an injective vertex
algebra homomorphism and
\begin{align*}
\phi(i(v))=\xi^{-1}(j(i(v)))=\xi^{-1}(\xi(\psi(v)))=\psi(v),
\end{align*}
for all $v \in E$.  

We now establish the uniqueness of $E''_{r}$. Let us say that $F$,
with an embedding $j:E \rightarrow F$ is another vertex algebra with
the same property.  Then we get two vertex algebra homomorphisms
$\phi:E''_{r} \rightarrow F$ and $\xi: F \rightarrow E''_{r}$ such that
$\phi(i(v))=j(v)$ and $\xi(j(v))=i(v)$ for all $v \in E$.  Thus
\begin{align*}
\phi(\xi(j(v)))=\phi(i(v))=j(v)
\end{align*}
and
\begin{align*}
\xi(\phi(i(v)))=\xi(j(v))=i(v).
\end{align*}
There is a unique vertex algebra homomorphism $\alpha:E''_{r} \rightarrow
E''_{r}$ such that $\alpha(i(v))=i(v)$ for all $v \in E$ or in other
words such that the diagram
\begin{align*}
\xymatrix{
E \ar[rd]^i \ar[r]^i & E''_{r} \ar[d]^\alpha\\
&E''_{r}}
\end{align*}
commutes.  This unique map $\alpha$ must be the identity map on
$E''_{r}$, but also must be $\xi \circ \phi$.  Similarly $\phi \circ
\xi$ must be the identity on $F$.  Therefore $\xi$ and $\phi$ are
canonically given isomorphisms respecting the embeddings.
\end{proof}

\begin{remark} \rm
\label{Drepnotcanon}
One could try to mimic the above to formulate a similar looking
property for $E'_{r}$ from Corollary \ref{cor:main} but because it is
not clear whether $E'_{r}$ is injective or not it is also not clear
whether or not $E'_{r}$ is canonical, at least from this point of
view.
\end{remark}

\noindent {\small \sc Department of Mathematics, Rutgers University,
Piscataway, NJ 08854} 
\\ {\em E--mail
address}: thomasro@math.rutgers.edu
\end{document}